\newtheorem{theorem}{Theorem}[section]
\newtheorem{lemma}[theorem]{Lemma}
\newtheorem{conjecture}{Conjecture}[section]
\begin{document}

\title{Gallai's Conjecture for Complete and ``Nearly Complete'' Graphs}

\author[1]{Hua Wang}
\author[2]{Andrew Zhang}

\affil[1]{Mathematical Sciences, Georgia Southern University }
\affil[2]{Wayzata High School}
\date{August 2022}
\maketitle

\begin{abstract}
The famous Gallai's Conjecture states that any connected graph with $n$ vertices has a path decomposition containing at most $\left\lfloor \frac{n+1}{2} \right\rfloor$ paths.
In this note, we explore graphs generated from removing edges from complete graphs. We first provide an explicit construction for a path decomposition of complete graphs that satisfies Gallai’s Conjecture. We then use that construction to prove that we can remove stars and certain tadpoles such that the resulting graph still satisfies Gallai’s Conjecture. We also introduce a potential general approach through analyzing non-isomorphic path decompositions of complete graphs.
\end{abstract}

\section{Introduction}\label{sec:definition}

One of the primary areas of study in graph theory is the decomposition of graphs.  We say that a set of subgraphs $S = \{ S_1, S_2, \dots S_n \}$ is a \textit{decomposition} of the original graph $G$ if every edge in $G$ is in exactly one of the subgraphs in $S$. Researchers have studied decomposing graphs into double stars \cite{El15}, trees \cite{Gal59}, and collections of paths and cycles \cite{Aru13}. 


In this paper, we consider the decomposition of graphs into paths, which we refer to as {\it path decompositions}. The study of path decompositions has been primarily focused on Gallai's Conjecture as stated below.

\begin{conjecture}[\cite{Gal59}] Any connected graph with $n$ vertices has a path decomposition containing at most $\lfloor \frac{n+1}{2} \rfloor$ paths .
\end{conjecture}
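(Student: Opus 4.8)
\emph{Remark: the final statement is Gallai's Conjecture itself, which remains open; what follows is an honest plan of attack rather than a claim of proof.}

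The plan is to attack the conjecture by induction, passing to a minimal counterexample $G$ (fewest edges, then fewest vertices) and extracting a structural contradiction. First I would record the parity bookkeeping that governs any path decomposition. In a decomposition into paths, the number of paths having a fixed vertex $v$ as an endpoint has the same parity as $\deg(v)$; hence every vertex of odd degree is the endpoint of at least one path. If $t$ denotes the number of odd-degree vertices (necessarily even), then counting endpoint slots gives $2 \cdot (\#\text{paths}) \geq t$, so at least $t/2$ paths are \emph{forced}. The target bound $\lfloor (n+1)/2 \rfloor$ therefore leaves a slack of roughly $(n-t)/2$ paths to absorb the even-degree vertices, and the whole difficulty is concentrated in how those vertices are arranged. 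The reduction I would pursue is thus to the subgraph $G_e$ induced by the set $V_e$ of even-degree vertices.

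Next I would establish the conjecture in the tractable regime where $G_e$ is a forest. Here one can peel paths greedily: repeatedly remove a suitable maximal path $P$, observe that deleting $P$ flips the parity of exactly its two endpoints, and charge each newly created path to a distinct vertex (or a distinct pair of vertices). The acyclic structure of $G_e$ is what lets the charging scheme close, because then every cycle of $G$ must pass through an odd-degree vertex, so one can orient and split trails without ever being forced to spend two separate paths on a single even-degree vertex. Iterating while tracking the running count keeps the total below $\lfloor (n+1)/2 \rfloor$, which would settle this case.

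The hard part, and the reason the conjecture is still open, is the general case where $G_e$ contains cycles, the extreme instance being an Eulerian graph in which every degree is even. There the forced lower bound $t/2$ is $0$, so the entire bound $\lfloor (n+1)/2 \rfloor$ must be realized by a global splitting of Euler trails, and the local peeling argument collapses: removing a closed trail can create two new odd vertices while consuming the edges of only one component, so the naive charging overspends. I would try to repair this by starting from a \emph{maximum} path decomposition and analyzing the auxiliary graph on those even-degree vertices that are endpoints of too many paths, hoping to produce an augmenting exchange that merges two paths into one whenever such a vertex is overused. I fully expect this exchange step to be the genuine obstruction: it is exactly the configuration on dense even subgraphs, of which the complete graph $K_n$ with $n$ odd is the canonical example, that no known argument controls in general. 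This is precisely why the remainder of the paper retreats from arbitrary $G_e$ to the complete and ``nearly complete'' graphs, where the symmetry of $K_n$ makes the required splitting explicit.
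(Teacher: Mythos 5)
You have correctly recognized that the statement is Gallai's Conjecture itself, which is open; the paper does not prove it either --- it only cites it and then establishes special cases (complete graphs via an explicit Walecki-based construction, and graphs obtained from $K_n$ by deleting stars or tadpoles). So there is no ``paper proof'' to match, and your refusal to claim one is the right call. Your parity bookkeeping is sound: in any path decomposition the number of paths ending at $v$ has the parity of $\deg(v)$, so with $t$ odd-degree vertices at least $t/2$ paths are forced, and the difficulty indeed concentrates in the even-degree part. Your framing via $G_e$ accurately reconstructs the actual literature the paper surveys (Pyber's forest result, Fan's and Botler--Sambinelli's block conditions, and the small-maximum-degree results).

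Two cautions on the plan itself. First, your sketch of the forest case is much thinner than what is needed: Pyber's proof is not a simple greedy peeling with a local charge --- removing a maximal path can flip parities in ways that break a naive per-vertex accounting, and the published argument requires a careful global induction; as written, your charging scheme would not close. Second, your proposed ``augmenting exchange'' on a maximum path decomposition is essentially the known approach, and the obstruction you anticipate is real: dense Eulerian pieces (odd complete graphs being the extreme case) defeat local exchanges, which is exactly why the paper abandons generality and instead writes down explicit decompositions --- a zigzag Hamiltonian path rotated $k$ times for $K_{2k}$, then rerouting matched edges through the new vertex $v_{2k+1}$ for $K_{2k+1}$. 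Note also that the paper's gain from explicitness is more than existence: the constructions expose many ``path ends'' at a common vertex, which is what powers the star and tadpole removals in Section~\ref{sec:expand}; a nonconstructive exchange argument, even if completed, would not yield that byproduct.
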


Initially stated in 1959 \cite{Gal59}, Gallai's Conjecture is still open to this day. The first fundamental piece of progress on Gallai's Conjecture was Lovasz's proof that any graph $G$ with $n$ vertices could be decomposed into at most $\lfloor \frac{n+1}{2} \rfloor$ paths and cycles \cite{lov68}.  Since this result by Lovasz in 1968, further work has been done in two main directions: analysis using E-subgraphs and bounds on the maximal degree of a graph.

The E-subgraph of a graph $G$, denoted as ${G}_e$, is the subgraph formed when all vertices of odd degree are removed from the graph. The first significant advancement using the E-subgraph was made by Pyber \cite{lp96}, who proved that if $G_e$ was a forest, then $G$ satisfied Gallai's Conjecture. 
This line of work was extended by Fan in 2005 \cite{GFan05}, who proved that if each block of $G_e$ was a triangle free graph of maximum degree at most 3, then Gallai's Conjecture held for $G$. Furthermore, Botler and Sambinelli \cite{Botler20} generalized Fan's result by proving that as long as each block in $G_e$ had maximum degree at most $3$ or $G_e$ had at most one block that contained triangles, $G$ satisfied Gallai's Conjecture. 

Another direction involves bounds on the maximum degree of graphs. For example, it was shown in 2019 that all graphs with maximum degree of at most 5 satisfy Gallai's Conjecture \cite{bon19}. Similarly, it was shown that all graphs of maximum degree 6, bar a few exceptions, satisfy Gallai's Conjecture \cite{chu21}. 

All of the above mentioned work seem to heavily restrict the degree of the vertices in the graphs. It is natural to consider Gallai's Conjecture in graphs without such constraints. 
In this paper, we consider path decompositions of complete and ``nearly complete'' graphs of any size, hence providing a new avenue of analyzing Gallai's Conjecture that does not rely on bounds on the degree of graphs.

In Section~\ref{sec:results}, we present explicit decompositions to show that all complete graphs satisfy Gallai's Conjecture. Then, in Section~\ref{sec:expand}, we show that stars and tadpoles can be removed from complete graphs so that Gallai's Conjecture still holds. This allows us to find a larger family of graphs that satisfy Gallai's Conjecture, but are still not bounded by a maximal degree.  Finally, in Section~\ref{sec:con}, we briefly summarize our findings and discuss some observations arised from trying to prove Gallai's Conjecture based on removing the path ends from a given path decomposition. 

\section{Complete graphs}\label{sec:results}
This section discusses Gallai's Conjecture for complete graphs. We will divide complete graphs into three categories: 1) those with $2k$ vertices, 2) those with $2k+1$ vertices where $k$ is even, and 3) those with $2k+1$ vertices where $k$ is odd. We will prove Gallai's Conjecture by presenting explicit constructions that will be used in later sections.
\subsection{Gallai's Conjecture for $K_n$ with even $n$}
\begin{lemma} \label{Gallaievencomplete}
Gallai's Conjecture holds for complete graphs with an even number of
vertices.
\end{lemma}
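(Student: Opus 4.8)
The target count for $n = 2k$ is $\lfloor \frac{n+1}{2}\rfloor = k$, and since $K_{2k}$ has $\binom{2k}{2} = k(2k-1)$ edges while any path on $2k$ vertices has at most $2k-1$ edges, meeting Gallai's bound forces the decomposition to consist of exactly $k$ Hamiltonian paths. So the lemma reduces to exhibiting $k$ edge-disjoint Hamiltonian paths whose union is all of $K_{2k}$.

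The plan is a rotational (Walecki-type) construction. Label the vertices by $\mathbb{Z}_{2k} = \{0, 1, \dots, 2k-1\}$ and take the ``zig-zag'' starter path
$$P_0 : \; 0,\, 1,\, 2k-1,\, 2,\, 2k-2,\, 3,\, 2k-3,\, \dots,\, k-1,\, k+1,\, k,$$
that is, $0$ followed by the alternation $j,\, 2k-j$ for $j = 1, \dots, k-1$ and then $k$. For $i = 0, 1, \dots, k-1$ let $P_i$ be the translate of $P_0$ obtained by adding $i$ to every vertex modulo $2k$. First I would check that each $P_i$ is a Hamiltonian path: translation is a bijection of $\mathbb{Z}_{2k}$, so it is enough to observe for $P_0$ that the listed vertices are all of $\mathbb{Z}_{2k}$, which is immediate.

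The heart of the argument is that $P_0, \dots, P_{k-1}$ are pairwise edge-disjoint. Because their edges already total $k(2k-1) = \binom{2k}{2}$, edge-disjointness is equivalent to covering every edge exactly once. I would track edges by their circular difference $d(a,b) = \min(|a-b|,\, 2k-|a-b|) \in \{1, \dots, k\}$, which translation preserves. Reading off the consecutive differences of $P_0$ shows it uses exactly two edges of each difference $d \in \{1, \dots, k-1\}$ and exactly one edge of difference $k$; since $K_{2k}$ has $2k$ edges of each difference $d < k$ and $k$ edges of difference $k$, the counts again match the $k$ translates. It then remains to verify that the two difference-$d$ edges of $P_0$, translated by $0, 1, \dots, k-1$, land on all $2k$ difference-$d$ edges without repetition, and that the single difference-$k$ edge tiles the corresponding perfect matching.

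This tiling check is the main obstacle, precisely because we rotate by only $k$ of the $2k$ available shifts, so a single starter edge of difference $d < k$ can reach at most half of its difference class. I would write each difference-$d$ edge uniquely as $\{x, x+d\}$ and record the two starting points $x$ used by the zig-zag; the construction succeeds exactly because these two values differ by $k$ modulo $2k$, so translating one by $\{0, \dots, k-1\}$ covers the starting points $x, x+1, \dots, x+k-1$ and the other covers $x+k, \dots, x+2k-1$, together all $2k$ residues with no overlap. Confirming this ``complementary halves'' property for every $d$ (and the analogous, easier statement for difference $k$) is the one computation I expect to require care; once it is in hand, $P_0, \dots, P_{k-1}$ is the desired decomposition into $k$ paths and Gallai's bound holds with equality.
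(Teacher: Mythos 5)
Your proposal is correct and is essentially the paper's own argument: the paper also proves this lemma via the Walecki decomposition, rotating a zigzag starter path through $k$ shifts to obtain $k$ edge-disjoint Hamiltonian paths of $K_{2k}$. The only difference is one of detail, not of substance --- the paper delegates the verification of edge-disjointness to the cited Walecki construction, whereas you sketch it directly via circular difference classes (and your ``complementary halves'' observation, that the two difference-$d$ edges of the starter path have starting points differing by $k$ modulo $2k$, is indeed the property that makes the count work out).
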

\begin{proof}
We use Walecki Decomposition \cite{alspach08} in this proof. Without loss of generality, we assume the vertices of our even complete graph are placed at the vertices of a regular $n$-gon.

Walecki Decomposition consists of first forming a zigzag path going through every vertex on the even complete graph and then rotating that zigzag to create new paths until the entire complete graph is covered. Figure \ref{fig:zigzag} gives an example for a complete graph on $6$ vertices. We first create a zigzag as shown in \ref{fig:singlezigzag}, and then rotate that zigzag twice. This gives the path decomposition of the complete graph as shown in \ref{fig:fulldecomp}.

\begin{figure}[htp]
    \begin{subfigure}[t]{0.4\textwidth}
         \centering
                  \includegraphics[height = 5cm]{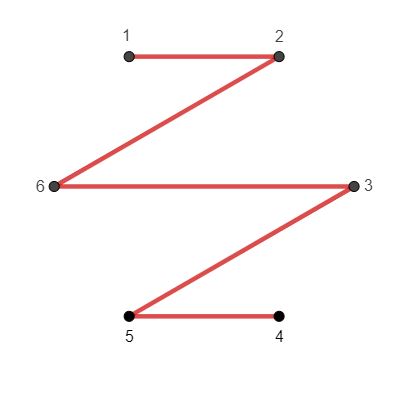}
         \caption{A zigzag on 6 vertices}
         \label{fig:singlezigzag}
     \end{subfigure}
\hskip 0.1in
     \begin{subfigure}[t]{0.48\textwidth}
         \centering
         \includegraphics[height=6cm]{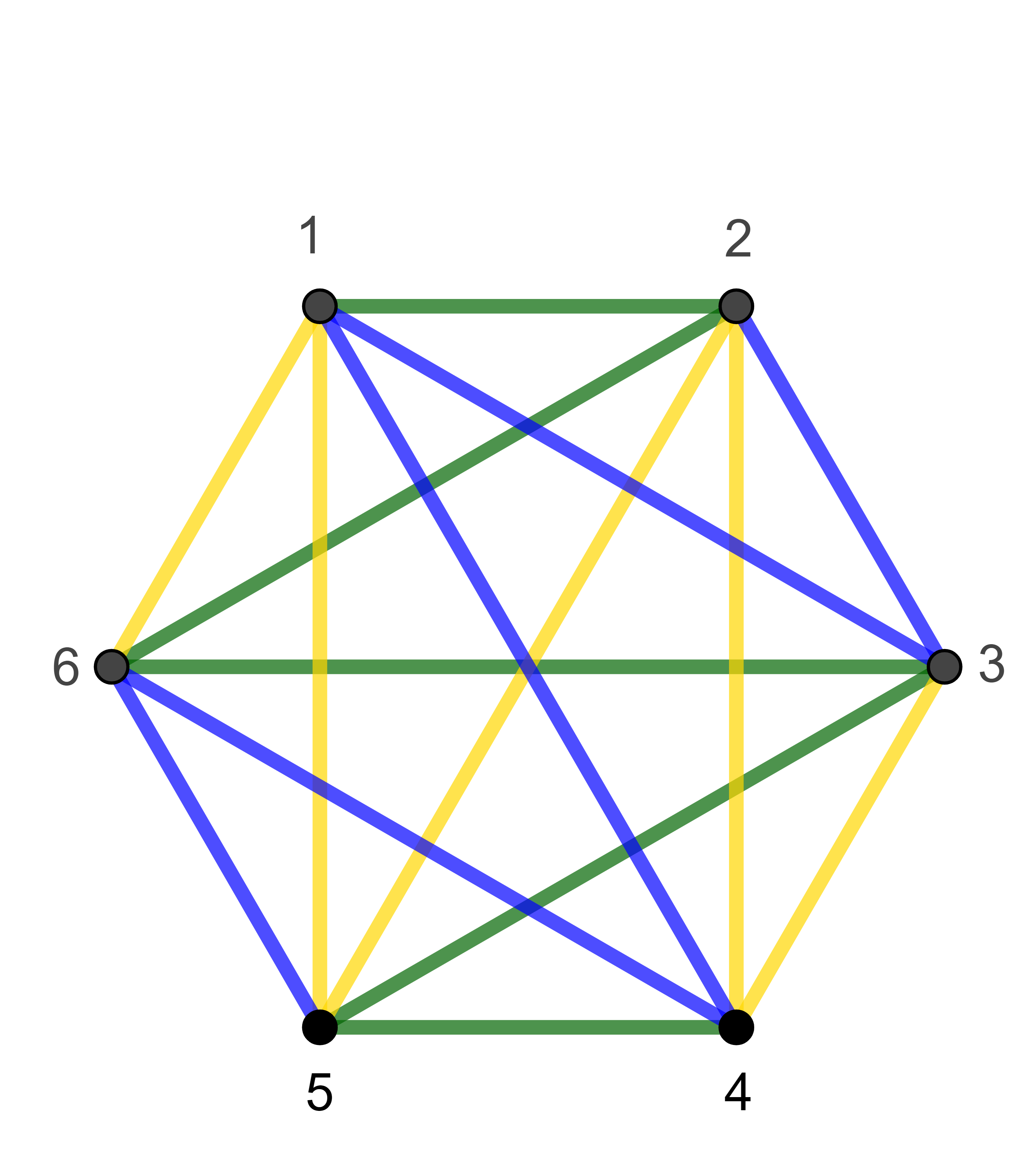}
         \caption{A Walecki Decomposition for $K_6$}
         \label{fig:fulldecomp}
     \end{subfigure}
    \caption{Path decomposition for $K_6$}
    \label{fig:zigzag}
\end{figure}

For even $n$, this strategy uses exactly  $\frac{n}{2} = \lfloor \frac{n+1}{2} \rfloor$ paths, satisfying Gallai's Conjecture.
\end{proof}


\subsection{Gallai's Conjecture for $K_n$ with odd $n$}
Letting $n=2k+1$, we will subdivide this into cases where $k$ is even and $k$ is odd. 

\subsubsection{Odd $k$}

\begin{lemma}\label{Thm: Complete graph decomp for odd k}
Gallai's Conjecture holds for an odd complete graph with $2k+1$ vertices where $k$ is  odd.
\end{lemma}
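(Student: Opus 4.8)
The goal is a decomposition of $K_{2k+1}$ into at most $\lfloor \frac{(2k+1)+1}{2}\rfloor = k+1$ paths. It is worth noting first that $k+1$ is also a lower bound: $K_{2k+1}$ has $k(2k+1)$ edges and every path uses at most $2k$ of them, so any path decomposition needs at least $\lceil k(2k+1)/(2k)\rceil = k+1$ paths. Thus I would aim to hit the bound exactly. The starting point is that $K_{2k+1}$ is $2k$-regular, hence Eulerian, and by Walecki's construction \cite{alspach08} decomposes into $k$ edge-disjoint Hamiltonian cycles $C_0,\dots,C_{k-1}$; writing the vertex set as $\{\infty\}\cup\mathbb{Z}_{2k}$, these are the rotations $C_i$ of a single zigzag cycle $C_0$ obtained by shifting the labels in $\mathbb{Z}_{2k}$ by $i$ while fixing $\infty$. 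Since each cycle has $2k+1$ edges and $k(2k+1)=\binom{2k+1}{2}$, these cycles partition $E(K_{2k+1})$.

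The plan is then to turn cycles into paths economically. Deleting a single edge $e_i$ from $C_i$ leaves a Hamiltonian path $C_i-e_i$, so the $k$ cycles yield $k$ paths covering every edge except $e_0,\dots,e_{k-1}$. If the deleted edges can be chosen so that $\{e_0,\dots,e_{k-1}\}$ is itself a single path $Q$, then $\{C_0-e_0,\dots,C_{k-1}-e_{k-1},Q\}$ is a decomposition into exactly $k+1$ paths, matching the bound. The entire task therefore reduces to choosing the $e_i$ so that they chain head-to-tail into one simple path.

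For this I would exploit that the rotation acts transitively on labels. Reading off the zigzag $C_0$ explicitly, I would single out the chord of $C_0$ of ``difference one'' in $\mathbb{Z}_{2k}$, say $\{0,1\}$; its image under the $i$-th rotation is the chord $\{i,i+1\}$, which lies in $C_i$. Deleting $e_i=\{i,i+1\}$ for $i=0,\dots,k-1$ then removes exactly the edges $\{0,1\},\{1,2\},\dots,\{k-1,k\}$, which trace the path $0-1-2-\cdots-k$. The two facts to verify are that each $e_i$ genuinely lies in a distinct cycle $C_i$ (so the $k$ deletions are legitimate, one per cycle) and that the indices $0,1,\dots,k$ are pairwise distinct in $\mathbb{Z}_{2k}$ (so $Q$ is a simple path). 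The latter is immediate for $k<2k$; the former is where the precise Walecki indexing, and with it the parity of $k$, must be tracked: the distribution of the difference-one chords among the rotated cycles depends on how the zigzag folds, and I would confirm for odd $k$ that the chords $\{0,1\},\dots,\{k-1,k\}$ land in $k$ distinct cycles as claimed, leaving the even-$k$ bookkeeping (which folds differently) to the companion construction.

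The main obstacle is exactly this last verification: certifying, from the explicit rotated zigzag, that the $k$ chosen chords are one-per-cycle and vertex-disjoint apart from their consecutive shared endpoints. I expect it to reduce to checking that a short arithmetic progression of indices modulo $2k+1$ has no collision, with the parity of $k$ entering through a sign or a factor of two at the point where the zigzag reverses. Once this index check is in place the path count $k+1=\lfloor\frac{n+1}{2}\rfloor$ follows immediately, and, crucially for Section~\ref{sec:expand}, the decomposition is fully explicit, with the extra path $Q$ sitting on a controlled set of vertices, which is what later lets us delete stars and tadpoles while preserving the bound.
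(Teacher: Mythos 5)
Your approach is correct, and it is a genuinely different route from the paper's. The paper never leaves the Walecki \emph{path} decomposition of $K_{2k}$: it reroutes the perfect-matching edges $v_1v_2, v_3v_4, \dots, v_{2k-1}v_{2k}$ through the new vertex $v_{2k+1}$, then deletes $v_2v_3, v_4v_5, \dots, v_{2k-2}v_{2k-1}$ and assembles all the leftover difference-one edges into a Hamiltonian $(k+1)$-st path $v_1v_2\cdots v_{2k}$ that avoids $v_{2k+1}$. The oddness of $k$ is needed there precisely so that the $k$ matching edges lie on $k$ distinct zigzag paths: the difference-one edge $\{m,m+1\}$ lies on the path indexed $m \bmod k$, and the residues $2j \bmod k$ for $j=0,\dots,k-1$ are pairwise distinct exactly when $k$ is odd. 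You instead pass to the Walecki \emph{Hamiltonian cycle} decomposition of $K_{2k+1}$ and delete one edge per cycle so that the deletions form the extra path $Q = 0\text{--}1\text{--}\cdots\text{--}k$. The verification you flag as your main obstacle closes in two lines and, contrary to your guess, involves no parity at all: writing the base cycle as $C_0 = (\infty, 0, 1, 2k-1, 2, 2k-2, \dots, k+1, k, \infty)$, its difference-one chords are exactly $\{0,1\}$ and $\{k,k+1\}$; since $C_i = C_0 + i$ and the shifts by $i$ and $i+k$ yield the same cycle, the distinct cycles are $C_0,\dots,C_{k-1}$, and $\{i,i+1\}$ lies in $C_i$ and in no other $C_j$ with $0 \le j \le k-1$ (a collision would force $i \equiv j+k \pmod{2k}$, impossible in this index range). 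The ``factor of two'' you anticipated is an artifact of the paper's choice of a matching (steps of two along the rim), not of your consecutive shifts $0,\dots,k-1$.

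As for what each route buys: yours is uniform in $k$, so it proves Lemma~\ref{Thm: Complete graph decomp for even k} at the same stroke and eliminates the paper's parity case split; it comes with your (correct) edge-count lower bound $\lceil k(2k+1)/(2k)\rceil = k+1$ certifying optimality, which the paper does not record; and in every path $C_i - e_i$ the hub $\infty$ joins an end edge ($\{\infty, i\}$) to its side edge ($\{\infty, k+i\}$), which is exactly the structural feature the star-removal argument of Section~\ref{sec:expand} exploits, so your decomposition plugs into the later sections. The paper's construction buys a different shape: its $(k+1)$-st path is Hamiltonian on $v_1,\dots,v_{2k}$, which its tadpole-removal argument (the cycle $v_1\cdots v_{2k}v_1$ with a pendant edge at $v_{2k+1}$) uses directly; with your shorter extra path $Q$ on $k+1$ vertices, that particular removal would need to be rechecked.
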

\begin{proof}
We  start by considering a complete graph on $2k$ vertices labeled sequentially from $1$ to $2k$. Note that we can build the path decomposition in Lemma \ref{Gallaievencomplete}. 

We will transform this path decomposition into a path decomposition of a complete graph with $2k+1$ vertices by adding $2k$ additional edges (from vertex $2k+1$ to all other vertices) to the existing paths.

First, define a rerouting of the edge $v_iv_j$ to a vertex $v_m$ where $v_iv_m$ and $v_jv_m$ do not exist as removing $v_iv_j$ and then adding the edges $v_iv_m$ and $v_jv_m$. Since the edges $v_1v_2, v_3v_4,$ $ \dots, v_{2k-1}v_{2k}$ all belong to different paths in the path decomposition, when we reroute them to $v_{2k+1}$, no path will visit $v_{2k+1}$ twice. 

Rerouting the edges from vertices $v_1v_2, v_3v_4,$ $ \dots, v_{2k-1} v_{2k}$ all to $v_{2k+1}$ creates the $2k$ connections needed between $v_{2k+1}$ and the other vertices. Now, we remove the edges $v_2v_3, v_4v_5, \dots ,$\\$v_{2k-2}v_{2k-1}$ and add the path $v_1v_2\dots v_{2k}$.

Thus, we have constructed a path decomposition for complete graphs with $2k+1$ vertices (where $k$ is odd) with $\frac{n+1}{2}$ paths, confirming Gallai's Conjecture.
\end{proof}

Figure~\ref{fig: 7 vertex complete graph} illustrates the process described above for $n=7$ and $k=3$.

\begin{figure}[htp]
    \centering
     \begin{subfigure}[t]{0.3\textwidth}
         \centering
         \includegraphics[width=\textwidth]{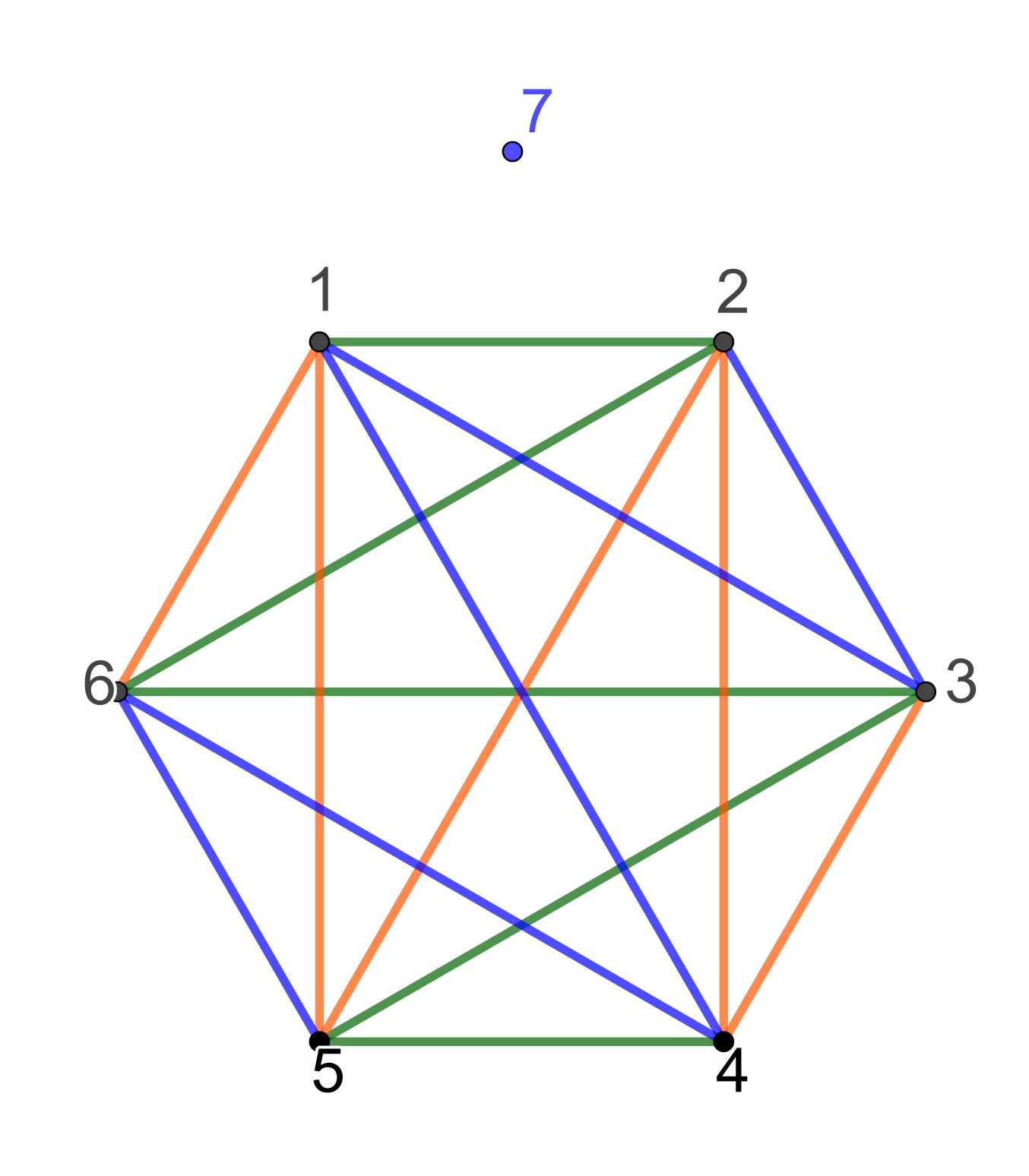}
         \caption{Adding one vertex onto $K_6$}
         \label{Fig: 7a}
     \end{subfigure}
     \hskip 0.2in
     \begin{subfigure}[t]{0.3\textwidth}
         \centering
         \includegraphics[width=\textwidth]{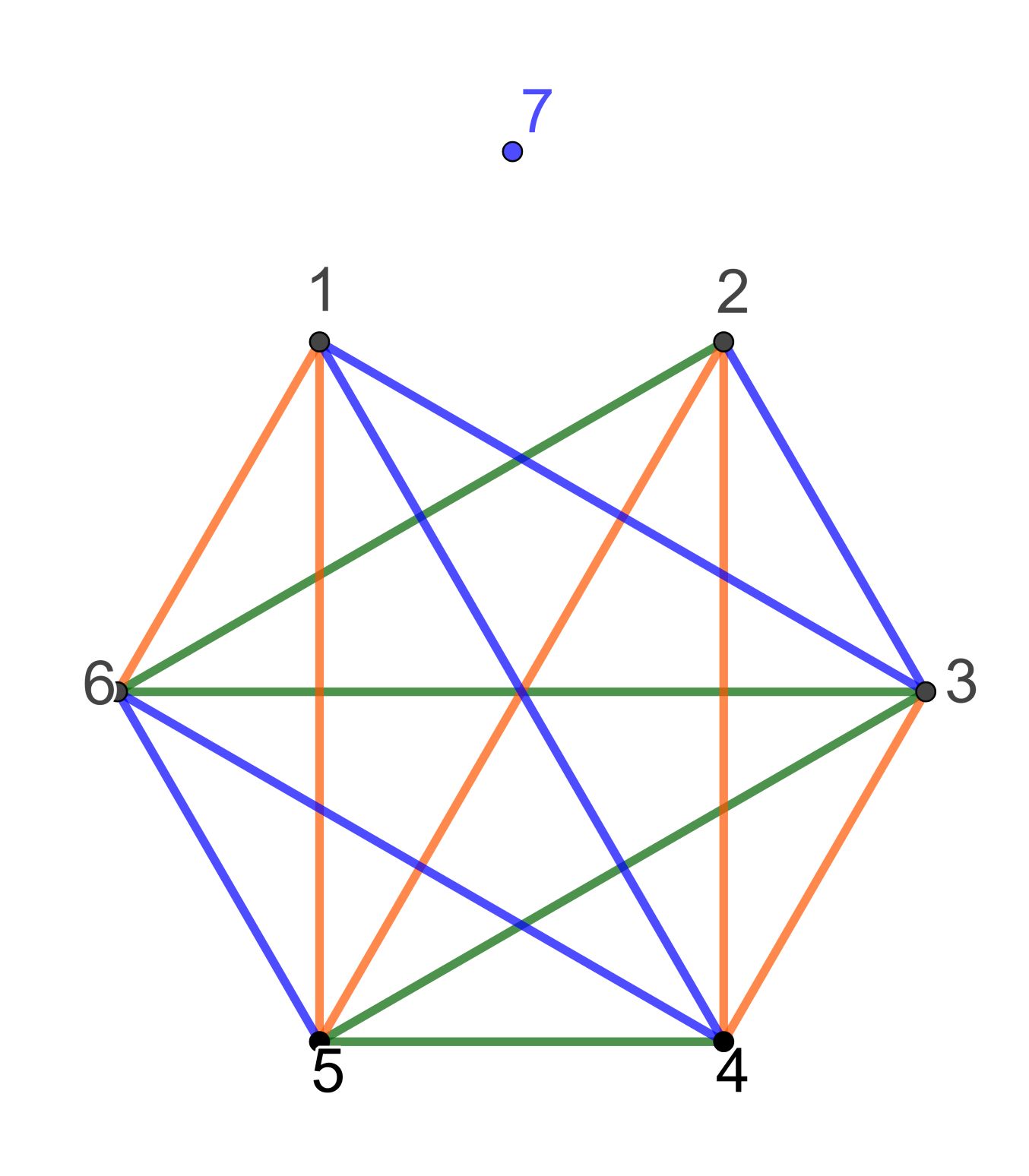}
         \caption{Removing $v_1v_2$}
         \label{Fig: 7b}
     \end{subfigure}
\hskip 0.2in
     \begin{subfigure}[t]{0.3\textwidth}
         \centering
         \includegraphics[width=\textwidth]{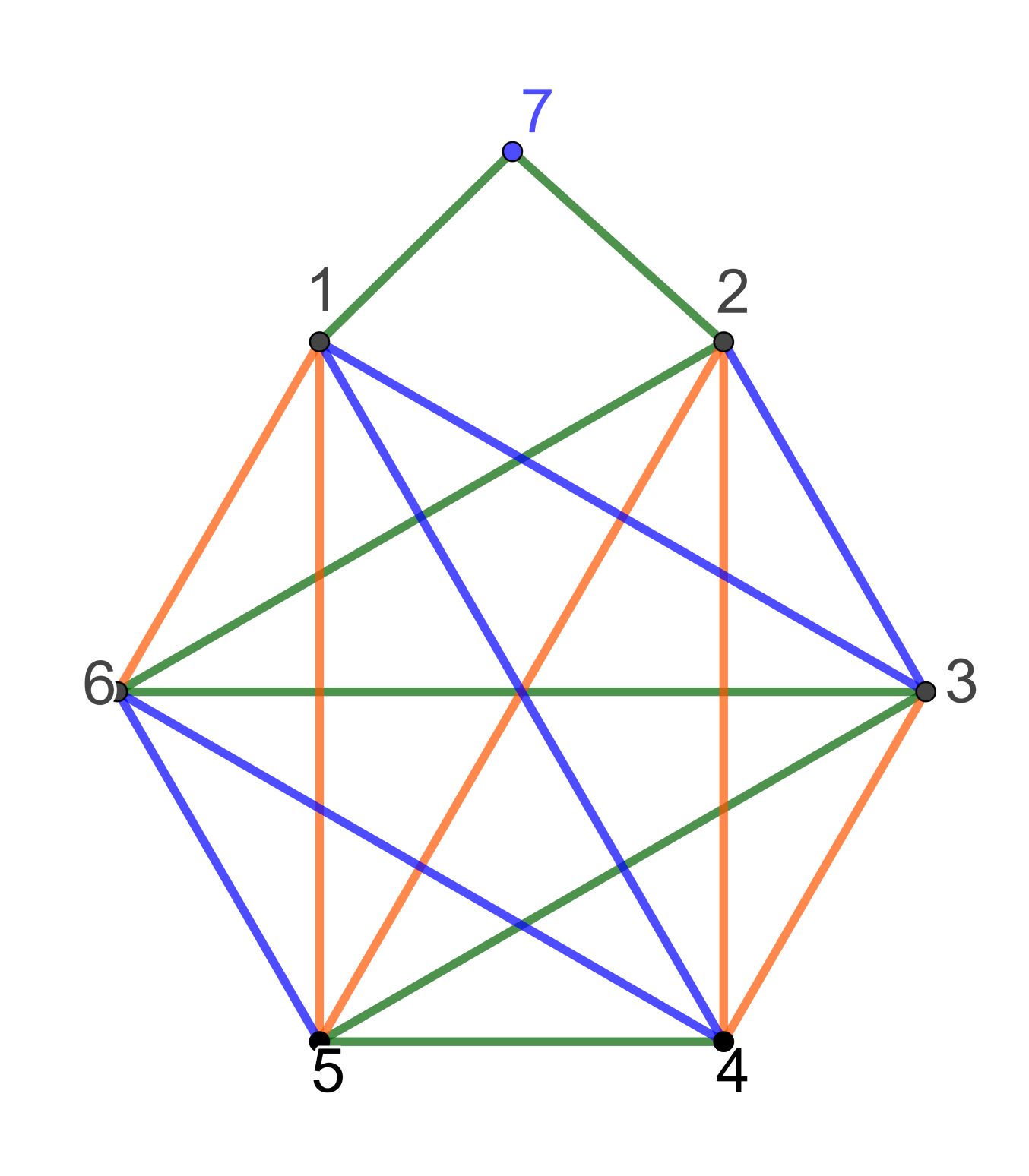}
         \caption{Adding in $v_1v_7$ and $v_2v_7$}
         \label{Fig: 7c}
     \end{subfigure}

        \begin{subfigure}[t]{0.3\textwidth}
         \centering
         \includegraphics[width=\textwidth]{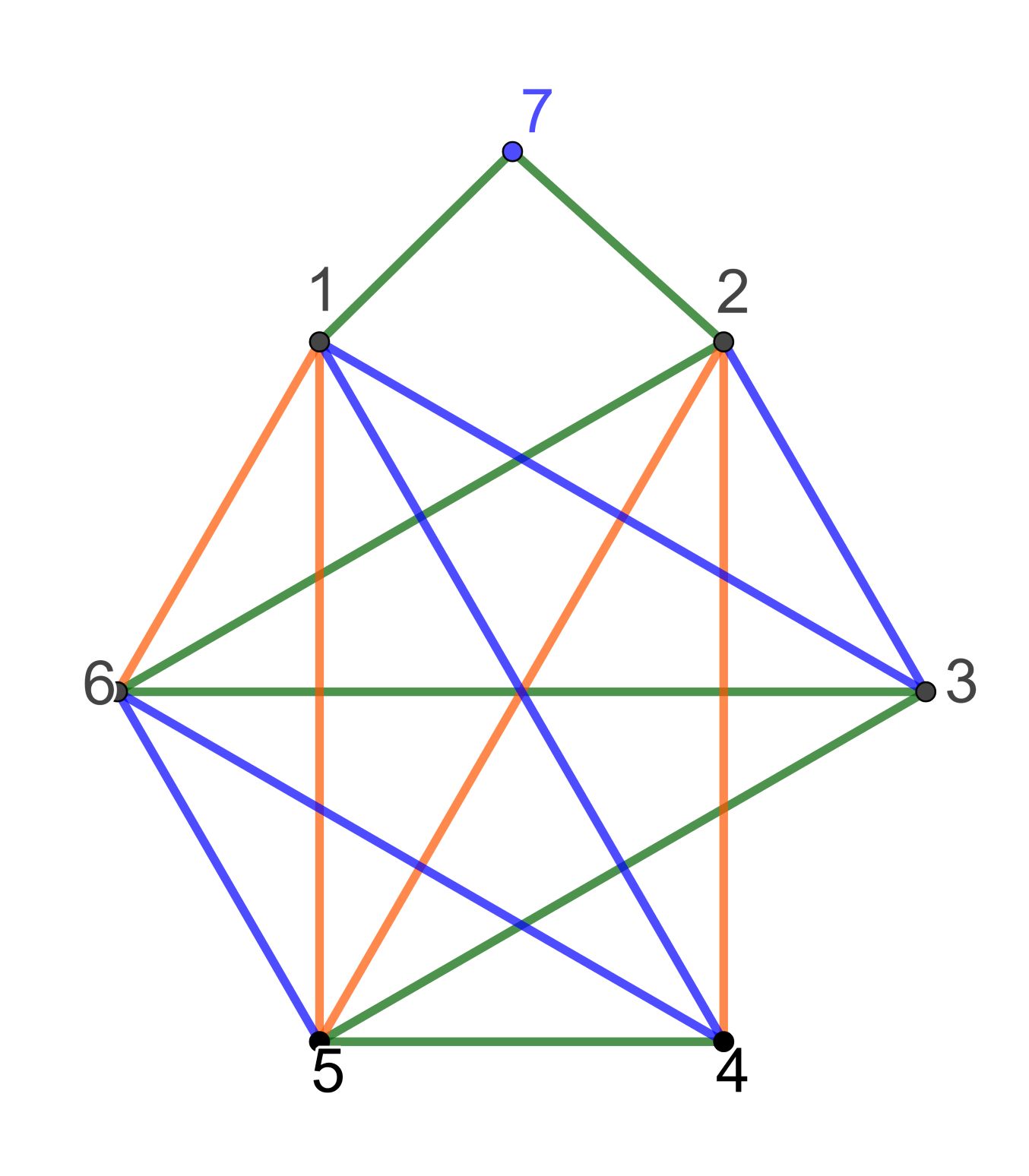}
         \caption{Removing $v_3v_4$}
         \label{Fig: 7d}
     \end{subfigure}
     \hskip 0.2in
     \begin{subfigure}[t]{0.3\textwidth}
         \centering
         \includegraphics[width=\textwidth]{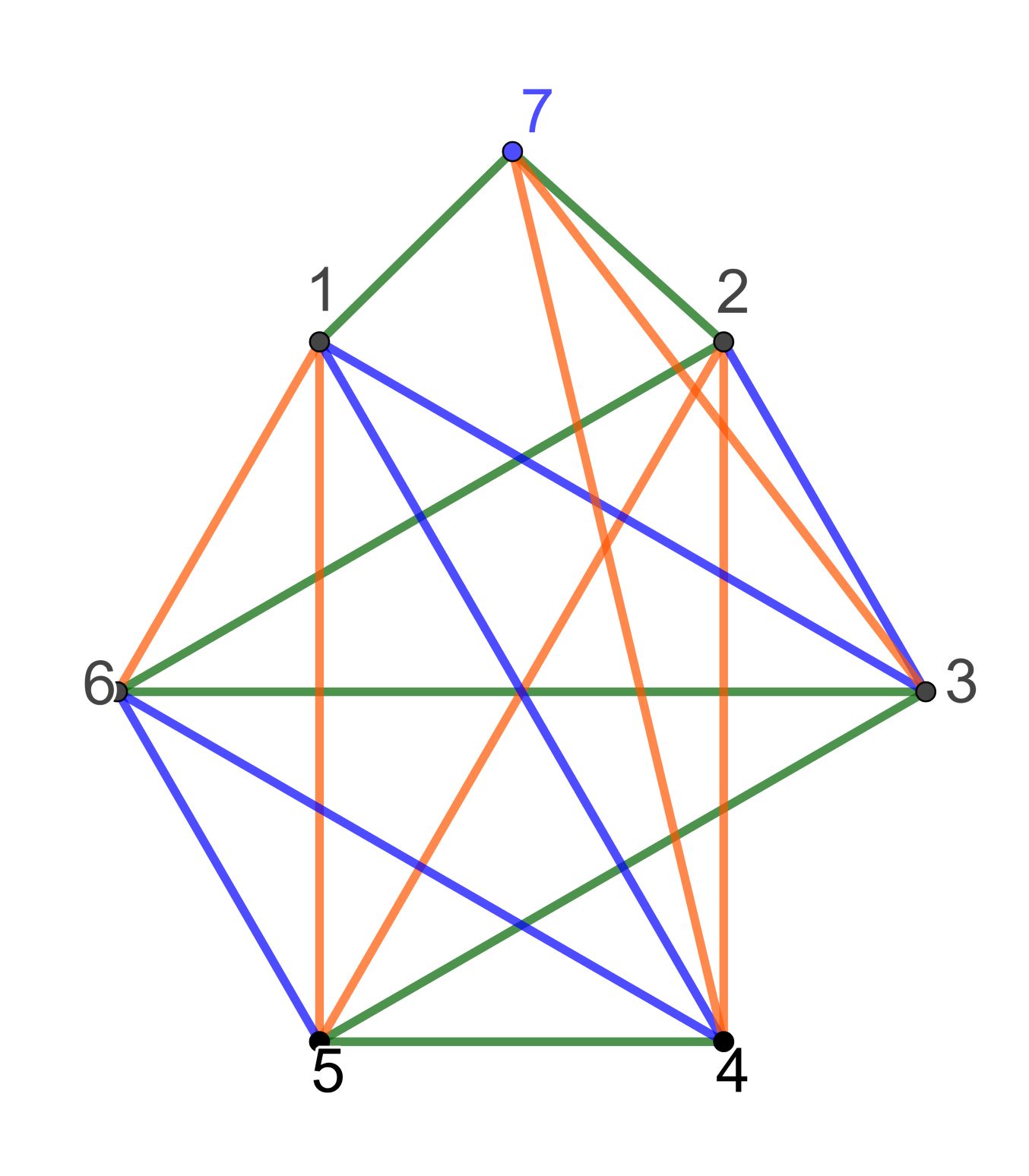}
         \caption{Adding in $v_3v_7$ and $v_4v_7$}
         \label{Fig: 7e}
     \end{subfigure}
     \hskip 0.2in
     \begin{subfigure}[t]{0.3\textwidth}
         \centering
         \includegraphics[width=\textwidth]{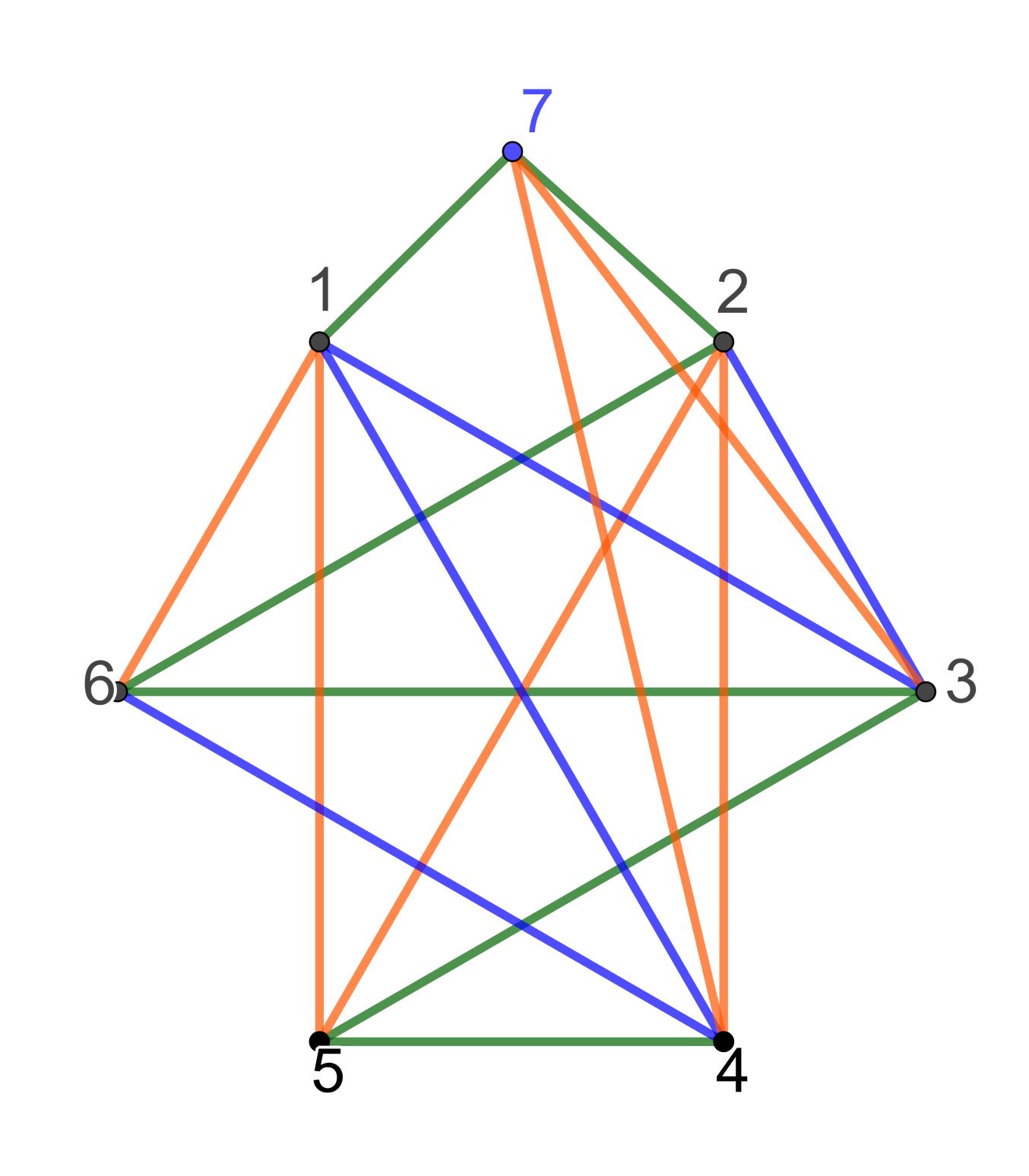}
          \caption{Removing $v_5v_6$}
          \label{Fig: 7f}
     \end{subfigure}

          \begin{subfigure}[t]{0.3\textwidth}
         \centering
         \includegraphics[width=\textwidth]{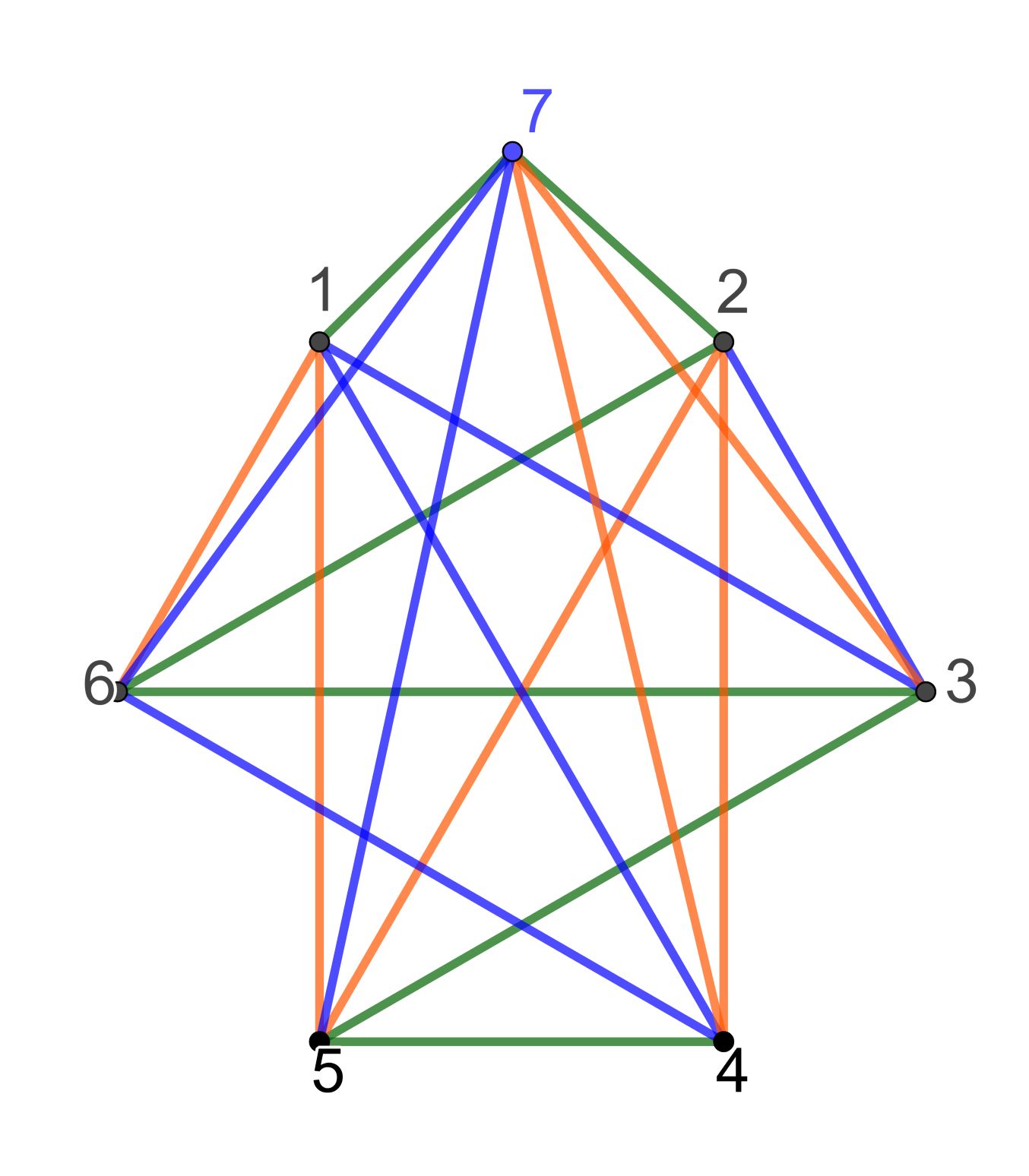}
          \caption{Adding in $v_5v_7$ and $v_6v_7$ }
          \label{Fig: 7g}
     \end{subfigure}
       \hskip 0.3in
          \begin{subfigure}[t]{0.3\textwidth}
         \centering
         \includegraphics[width=\textwidth]{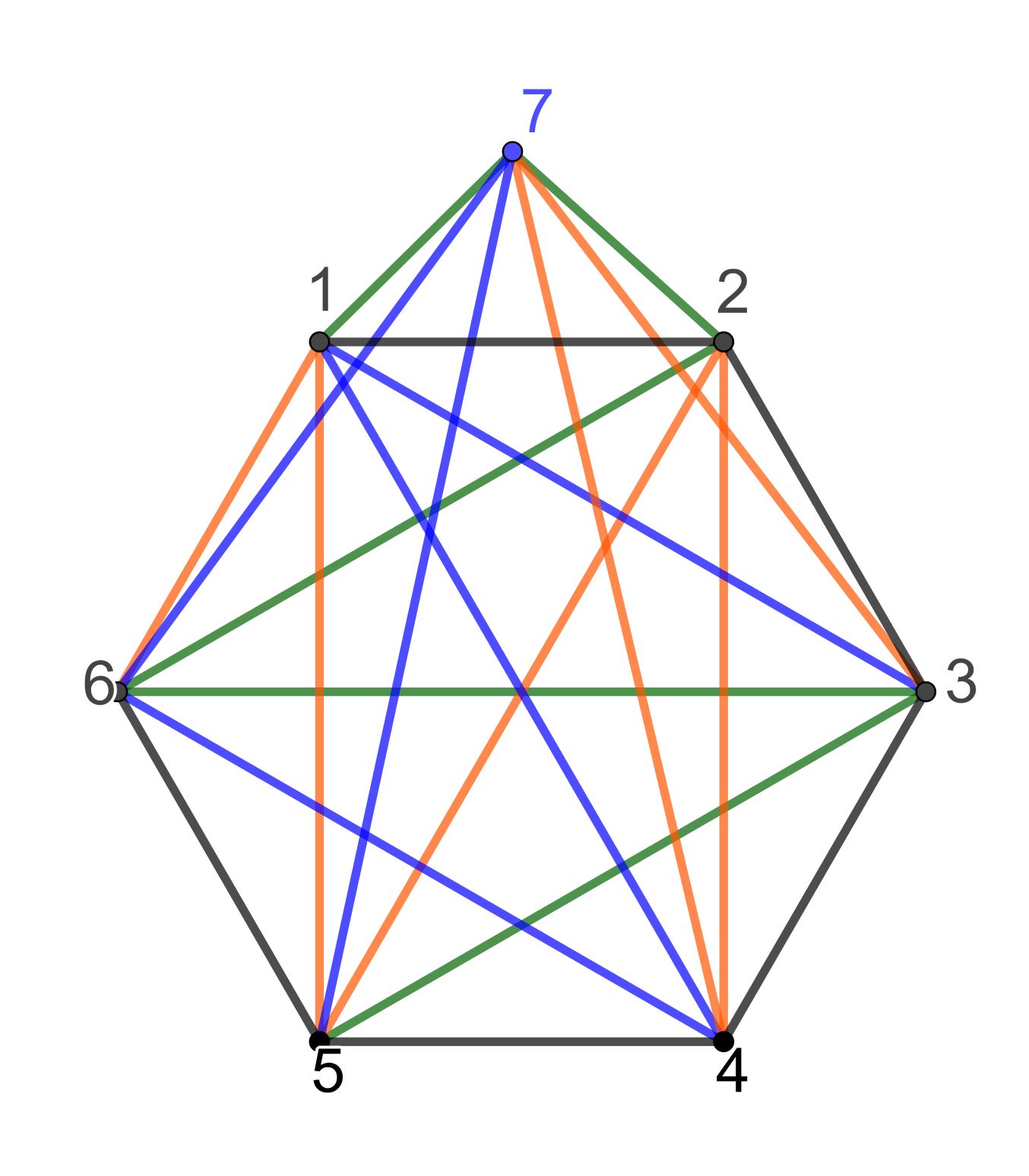}
         \caption{Removing $v_2v_3$ and $v_4v_5$, Adding $v_1v_2v_3v_4v_5v_6$}
         \label{Fig: 7h}
     \end{subfigure}
    \caption{Path decomposition of $K_{7}$}
    \label{fig: 7 vertex complete graph}
\end{figure}

\subsubsection{Even $k$}

\begin{lemma}\label{Thm: Complete graph decomp for even k}
Gallai's Conjecture holds for an odd complete graph with $2k+1$ vertices where $k$ is even.
\end{lemma}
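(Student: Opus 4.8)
The plan is to mirror the construction of Lemma~\ref{Thm: Complete graph decomp for odd k}: begin with the Walecki decomposition of $K_{2k}$ into $k$ Hamiltonian paths supplied by Lemma~\ref{Gallaievencomplete}, adjoin the new vertex $v_{2k+1}$, and reroute a carefully chosen set of $k$ edges to $v_{2k+1}$ so that each of the $k$ paths picks up $v_{2k+1}$ exactly once. The crucial difference from the odd case is the choice of which edges to reroute. For odd $k$ the consecutive matching $v_1v_2, v_3v_4, \dots, v_{2k-1}v_{2k}$ meets each Walecki path exactly once, but for even $k$ this fails: already in $K_4$ one checks that two of these edges lie in the same path, so rerouting both would force that path to pass through $v_{2k+1}$ twice. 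I would instead reroute the $k$ main diagonals $v_iv_{i+k}$ with $1 \le i \le k$.

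First I would verify that the diagonals form the required transversal. Writing the Walecki paths as the rotations of the base zigzag $v_1, v_2, v_{2k}, v_3, v_{2k-1}, \dots$, each such path uses every chord-length $1, \dots, k-1$ twice and the diameter length $k$ exactly once; hence the $k$ diameters are distributed one per path. Moreover the diameter always sits in the interior of the zigzag, so after rerouting $v_iv_{i+k}$ to $v_{2k+1}$ the vertex $v_{2k+1}$ becomes an interior vertex of that path and the path stays a genuine path. Since the $k$ diameters cover all $2k$ vertices, $v_{2k+1}$ becomes adjacent to every other vertex, and we still have exactly $k$ paths, now covering all of $K_{2k+1}$ except the $k$ diameter edges themselves.

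Next I would reassemble the $k$ removed diameters into a single additional path. The key structural observation is that the two end-edges of the base zigzag, and hence of every rotation, have length $1$; consequently the $2k$ path-ends are exactly the $2k$ ``outer'' edges $v_iv_{i+1}$. I would stitch the $k$ diameters into one Hamiltonian path by alternating diameters with $k-1$ outer edges, for instance traversing $v_1, v_{1+k}, v_{2+k}, v_2, v_3, v_{3+k}, v_{4+k}, v_4, \dots$, so that each connector is an outer edge attached to the correct endpoint of its two diameters. Because every connector is an end-edge of some path, deleting it merely shortens that path rather than splitting it, so the total path count does not grow. Adding the new stitched path then yields $k+1 = \lfloor \frac{n+1}{2} \rfloor$ paths, as required.

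The main obstacle is the bookkeeping in this last step: I must confirm that the alternating stitch is always a single simple path on all $2k$ vertices for every even $k$, that the $k-1$ connectors are distinct outer edges, and that no path is asked to surrender more edges than it has (each path offers at most its two ends). These are finite, checkable conditions, and the rotational symmetry of the Walecki decomposition should make them routine; but arranging the alternation of ``upper'' and ``lower'' connectors so that it closes up into one path, rather than breaking into several, is the delicate point that genuinely distinguishes the even-$k$ argument from the odd-$k$ one.
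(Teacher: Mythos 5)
Your construction is correct, and while it shares the paper's skeleton---start from the Walecki decomposition of $K_{2k}$, reroute a set of edges through the new vertex $v_{2k+1}$, and absorb the freed edges into one additional path, giving $k+1=\lfloor \frac{n+1}{2} \rfloor$ paths---the instantiation is genuinely different. The paper reroutes $k-1$ \emph{outer} edges ($v_1v_2,\dots,v_{k-1}v_k$ and $v_{k+2}v_{k+3},\dots,v_{2k-2}v_{2k-1}$), which leaves $v_{k+1}$ and $v_{2k}$ unserved, and supplies those two missing edges inside the extra path itself, taken as the single sweep $v_1v_2\cdots v_{k+1}v_{2k+1}v_{2k}v_{2k-1}\cdots v_{k+2}$; the outer edges this sweep traverses are path ends and are simply deleted from their paths. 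You instead reroute the $k$ diameters $v_iv_{i+k}$, and your key observation---that the base zigzag uses each chord length $1,\dots,k-1$ twice and the diameter exactly once, so the diameters form a perfect transversal of the $k$ paths---is correct and arguably cleaner: every path picks up $v_{2k+1}$ as an interior vertex, with no special two-edge hook needed. The bookkeeping you flagged as the delicate point does close up: writing your stitch in blocks $v_{2t+1},\,v_{k+2t+1},\,v_{k+2t+2},\,v_{2t+2}$ for $t=0,\dots,\frac{k}{2}-1$, joined by the lower edges $v_{2t+2}v_{2t+3}$, the connectors are the $\frac{k}{2}$ upper edges $v_{k+2t+1}v_{k+2t+2}$ and the $\frac{k}{2}-1$ lower edges, all distinct; since path $i$ has exactly the two outer edges $v_iv_{i+1}$ and $v_{k+i}v_{k+i+1}$ as its ends (and rerouting the interior diameter does not disturb them), each connector is an end edge of exactly one path, so no path surrenders more than one end and the stitch is a single simple path on all $2k$ vertices for every even $k$. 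One bonus you did not note: nothing in your construction actually uses the parity of $k$ (for odd $k$ the final block degenerates to a single diameter, e.g.\ $v_1v_4v_5v_2v_3v_6$ when $2k=6$), so the diameter transversal would prove Lemma~\ref{Thm: Complete graph decomp for odd k} and Lemma~\ref{Thm: Complete graph decomp for even k} uniformly, whereas the paper's consecutive matching $v_1v_2,\dots,v_{2k-1}v_{2k}$ is precisely what forces its case split---your $K_4$ counterexample to that matching for even $k$ is the right diagnosis.
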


\begin{proof}

We still will use the path decomposition generated in Lemma \ref{Gallaievencomplete} for the even complete graph on $2k$ vertices. 

In this case, we first reroute $\frac{k}{2}$ edges to $v_{2k+1}$ by rerouting $v_1v_2, v_3v_4, \dots, v_{k-1}v_k$. Then, we reroute $\frac{k}{2}-1$ edges by rerouting $v_{k+2}v_{k+3}, v_{k+4}v_{k+5}, \dots, v_{2k-2}v_{2k-1}$. We are left with $v_{k+1}$ and $v_{2k}$ that do not have edges to  $v_{2k+1}$. To create these two edges, we create a path that starts at  $v_{2k}$, goes to  $v_{2k+1}$, and then goes back to  $v_{k+1}$. To form the rest of the path we add in the edges $v_{k+1}v_k, v_{k}v_{k-1}, \dots, v_2v_1$ and the edges $v_{2k}v_{2k-1}, v_{2k-1}v_{2k-2}, \dots, v_{k+3}v_{k+2}$. If these edges already belong to some paths, they are simply removed from those paths (note that they are the ``ends'' of those paths). This gives a path decomposition with $\frac{n+1}{2}$ paths, verifying Gallai's Conjecture.
\end{proof}

Figure~\ref{fig:9Point} shows an example of the constructed path decomposition for $9$ vertices. 
\begin{figure}[htp]
    \centering
    \includegraphics[height=8cm]{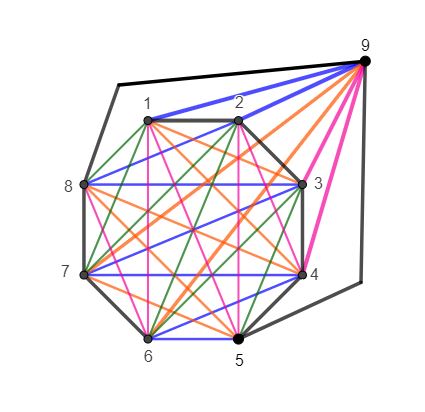}
    \caption{Path decomposition of $K_9$}
    \label{fig:9Point}
\end{figure}

\section{``Nearly complete'' graphs}\label{sec:expand}
In this section, we will use our previously constructed path decompositions to show that we can remove stars and certain tadpoles from complete graphs and still have a graph that satisfies Gallai's Conjecture. This contributes towards our goal of finding general results regarding graphs that satisfy Gallai's Conjecture without degree bounds.

First, note that because the vertices of a graph can be relabeled without loss of generality, we only need to prove that we can remove a subgraph from one part of the graph and we will know that we can remove that subgraph from anywhere on the graph.

\subsection{Removing stars} 

\begin{theorem}
Given $n\geq 2$, Gallai's Conjecture holds for any graph $G$ obtained from removing a star on $m\leq n-2$ edges from $K_n$. 
\end{theorem}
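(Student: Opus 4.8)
The plan is to avoid editing a decomposition of $K_n$ directly. In any decomposition of $K_n$ meeting Gallai's bound, the center $c$ of the star can be an endpoint of at most two paths: if $c$ is an endpoint of $p$ paths and interior to $q$ paths then $p+2q=n-1$ while $p+q\leq\lfloor\frac{n+1}{2}\rfloor$, which forces $p\leq 2$. Hence at most two star edges could be deleted as path ends, and deleting an interior edge at $c$ splits a path and raises the count; since we must delete up to $m=n-2$ edges, editing $K_n$ is hopeless. Instead I would build a decomposition of $G$ from the ground up, starting from the explicit decomposition of the complete graph $K_{n-1}$ on $V\setminus\{c\}$ supplied by Lemmas~\ref{Gallaievencomplete}--\ref{Thm: Complete graph decomp for even k}, which uses $L=\lfloor n/2\rfloor$ paths, and then inserting $c$ together with its $d=n-1-m$ surviving edges. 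Using the relabeling remark preceding the theorem, I would fix $c$ and choose its neighborhood $W$ (with $|W|=d$) to be whatever vertex set is most convenient for the base decomposition.

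To attach $c$ I would reuse the rerouting device of Lemma~\ref{Thm: Complete graph decomp for odd k}. Pairing up the vertices of $W$, I would reroute a matching $M'=\{w_1w_2,w_3w_4,\dots\}$ of $\lfloor d/2\rfloor$ edges of $K_{n-1}$ through $c$, each rerouted edge chosen to lie in a \emph{different} base path. Each rerouting turns a segment $w_{2i-1}w_{2i}$ into the ``tent'' $w_{2i-1}\,c\,w_{2i}$, makes $c$ an interior vertex of that path, and creates the edges $cw_{2i-1},cw_{2i}$ without changing the number of paths. After $\lfloor d/2\rfloor$ reroutings every edge at $c$ has been created (the leftover edge $cw_d$, when $d$ is odd, is handled below), at the cost of having deleted $M'$ from the cover. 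I would then recover $M'$ exactly as in Lemma~\ref{Thm: Complete graph decomp for odd k}: delete the complementary edges $w_2w_3,w_4w_5,\dots$, which sit at path ends and so cost nothing, and add the single path $w_1w_2\cdots w_d$, extended by $cw_d$ when $d$ is odd.

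The bookkeeping then splits by the parity of $n$. When $n$ is odd, $L=\frac{n-1}{2}$ and the lone compensating path is exactly the slack allowed, giving $\frac{n-1}{2}+1=\lfloor\frac{n+1}{2}\rfloor$ paths. When $n$ is even, however, $L=\frac n2=\lfloor\frac{n+1}{2}\rfloor$ already saturates the bound, so the compensating path may \emph{not} be introduced as a new path. Here I would exploit the fact that the base decomposition of $K_{n-1}$ (now an odd complete graph) produced by Lemma~\ref{Thm: Complete graph decomp for odd k} or~\ref{Thm: Complete graph decomp for even k} contains a long, non-spanning path with a free end; I would route the reroutings so that the edges deleted from the cover are re-absorbed by rewriting this existing path rather than by creating a fresh one.

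The even case is where I expect the real difficulty. Keeping the counts correct requires three guarantees that the explicit zigzag coordinates, rather than any abstract argument, will have to supply: that the $\lfloor d/2\rfloor$ rerouted edges can be selected in pairwise distinct base paths (so $c$ is never inserted twice into one path); that the complementary edges removed during recovery really are path ends; and that the compensating path is simple, which is why $W$ should be a contiguous block adapted to the zigzag. The zero-slack absorption of the recovered edges into the long path, simultaneously for every admissible $d$, is the crux, and I would expect to verify it by tracking the construction in explicit coordinates and, if necessary, separating subcases by the residue of $k$ exactly as Lemmas~\ref{Thm: Complete graph decomp for odd k} and~\ref{Thm: Complete graph decomp for even k} already do.
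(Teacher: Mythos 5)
Your opening impossibility argument is where the proposal goes wrong, and it matters because it steers you away from the approach that actually works. The inequality $p+2q=n-1$, $p+q\leq\lfloor\frac{n+1}{2}\rfloor$ does correctly show that at most two paths can \emph{end at} the star center $c$, but it does not follow that at most two star edges can be deleted as path ends. An edge $uc$ is an end edge of its path whenever $u$ is an endpoint, even though $c$ is an \emph{interior} vertex (namely the penultimate one); deleting $uc$ leaves the path intact with $c$ as its new endpoint, after which the next edge at $c$ is itself an end edge and can be deleted in turn. Moreover, the split caused by deleting a pair of edges at a genuinely interior position can be repaired without increasing the path count by reassigning an end edge of a \emph{different} path into the gap. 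This is precisely the paper's proof: in the decompositions of Lemmas~\ref{Thm: Complete graph decomp for odd k} and~\ref{Thm: Complete graph decomp for even k}, $v_{2k+1}$ is adjacent to a path end on $k$ (resp.\ $k-1$) of the paths, so end edges and then side edges at $v_{2k+1}$ can be stripped, removing up to $2k-1=n-2$ star edges; and for even $n$ the paper deletes ``forks'' $u\,v\,w$ centered at $v$ and reconnects the broken path using the base edge $uw$, which the Walecki decomposition guarantees is an end edge of another path. So editing the $K_n$ decomposition is not ``hopeless'' --- it is the entire argument, and your counting lemma, while true, does not support the conclusion you draw from it.

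Setting that aside, your replacement route (decompose $K_{n-1}$ on $V\setminus\{c\}$ and attach $c$ by rerouting a matching through it, recovering the matching via a compensating path as in Lemma~\ref{Thm: Complete graph decomp for odd k}) is a genuinely different and plausible plan, and the bookkeeping for odd $n$ looks sound: $\frac{n-1}{2}$ Hamiltonian zigzags of $K_{n-1}$ plus one compensating path meets the bound, and a contiguous block $W=\{v_1,\dots,v_d\}$ makes the distinct-paths and path-end requirements checkable in Walecki coordinates. But for even $n$ you concede the decisive point: there $K_{n-1}$ already uses $\frac{n}{2}=\lfloor\frac{n+1}{2}\rfloor$ paths, so the compensating path must be absorbed into an existing path with zero slack, uniformly over all $d=n-1-m\in\{1,\dots,n-2\}$, and you describe this absorption as something you ``would expect to verify'' rather than verify. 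That case is half the theorem, and you yourself call it the crux; as written, the proposal is therefore not a proof but a program with a concrete unfilled gap in the even-$n$ case --- a gap you were forced into by rejecting, on flawed grounds, the direct edit of the $K_n$ decomposition that closes both cases.
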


\begin{proof}
We will consider two different cases depending on the parity of $n$.
\begin{itemize}
 \item $n=2k+1$:

First, considering when $k$ is odd, we will start with the complete graph path decomposition created in Lemma \ref{Thm: Complete graph decomp for odd k}. An ``end edge'' is the edge on the end of a path in our decomposition, and a ``side edge'' is the edge adjacent to the end edge. Note that we can remove the end edge without disconnecting the rest of the path.

In our path decomposition for the complete graph, $v_{2k+1}$ joins the end edge and the side edge of $k$ paths. Thus, we can remove the $k$ end edges without disconnecting any paths. Additionally, after removing all $k$ end edges, we can remove up to $k-1$ of the adjacent edges. This means that we can remove a star with up to $2k-1$ edges centered at $v_{2k+1}$ without disconnecting any paths. The remaining graph and path decomposition verifies Gallai's Conjecture.

Figure \ref{fig:starremoval} shows the removal of a star on $5$ edges from  $K_7$.

When $k$ is even, we again start with the complete graph path decomposition created in Lemma \ref{Thm: Complete graph decomp for even k}. Note that $v_{2k+1}$ joins the end edge and the side edge of $k-1$ paths. To remove two more edges from $v_{2k+1}$, we remove $v_{2k}v_{2k+1}$ and $v_{k+1}v_{2k+1}$, and then reconnect that path by changing the end edge $v_{2k}v_{1}$ to be on that path. Now, note that these removals will suffice to remove stars with up to $2k-1$ edges centered at $v_{2k+1}$ without disconnecting any paths, again verifying Gallai's Conjecture for star removal in this case.
\begin{figure}[htp]
    \centering
     \begin{subfigure}[t]{0.32\textwidth}
         \centering
         \includegraphics[width=\textwidth]{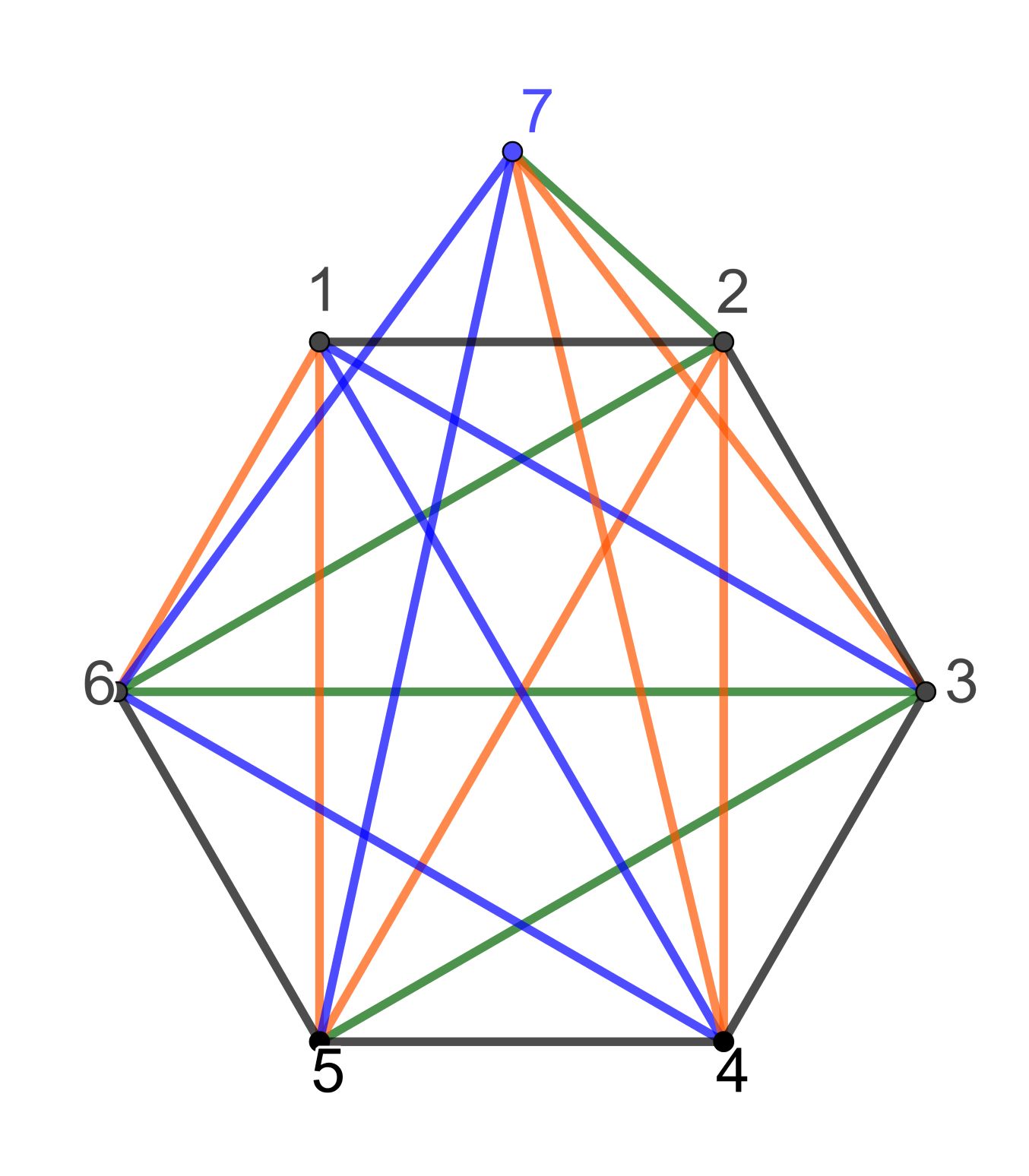}
         \caption{Removing the end edge of the green path connected to $v_7$}
     \end{subfigure}
     \hfill
     \begin{subfigure}[t]{0.32\textwidth}
         \centering
         \includegraphics[width=\textwidth]{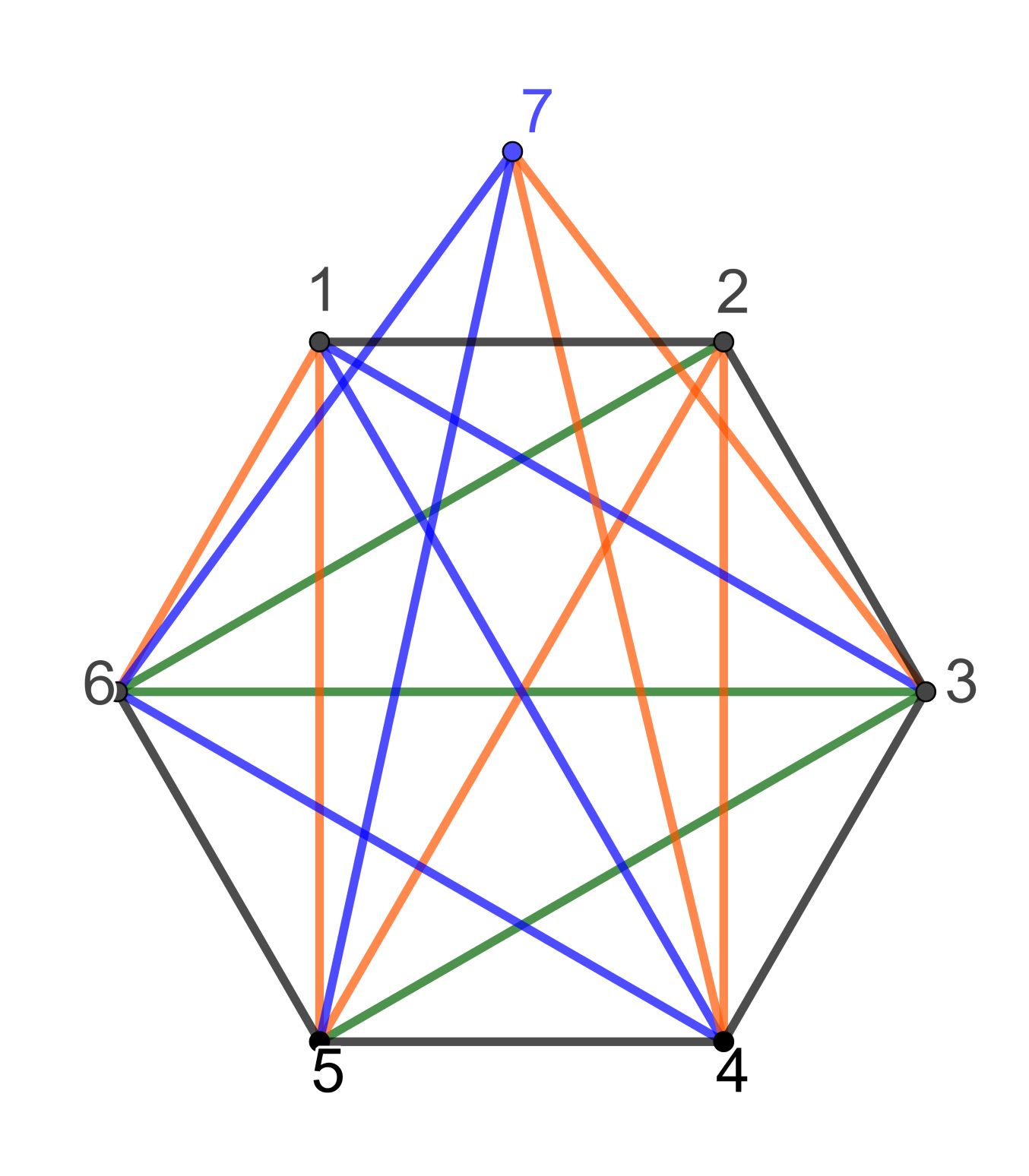}
         \caption{Remove the adjacent edge of the green path connected to $v_7$}
     \end{subfigure}
     \hfill
     \begin{subfigure}[t]{0.32\textwidth}
         \centering
         \includegraphics[width=\textwidth]{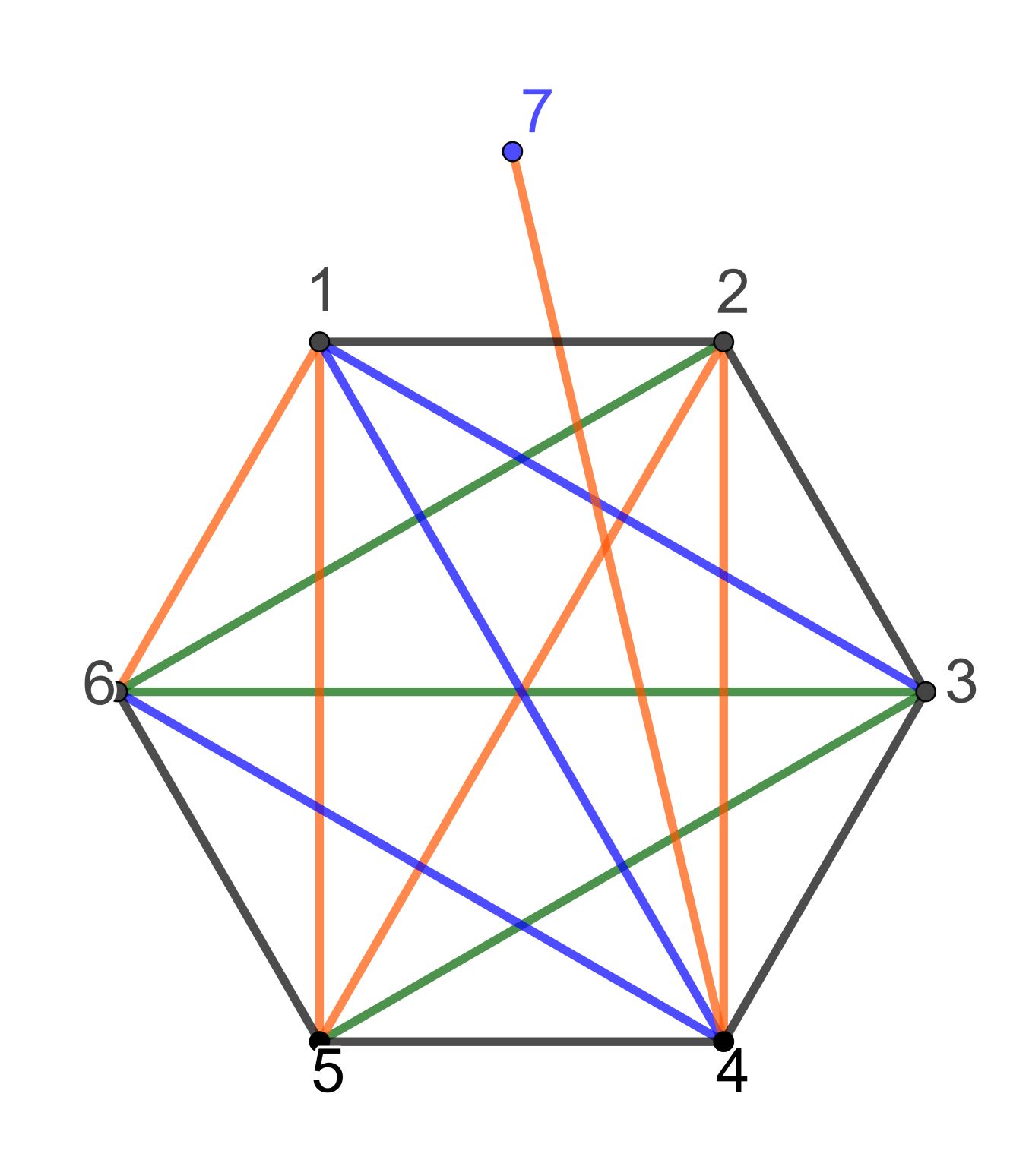}
         \caption{Removing the end and adjacent edge of the blue path and the end edge of orange path}
     \end{subfigure}
    
    \caption{Star removal example: Removing a star on $5$ edges from  $K_7$}
    \label{fig:starremoval}
\end{figure}

\item $n=2k$:

First, we call a pair of adjacent edges on a path ``fork'' with the ``center'' of the fork being the common vertex. We will also call the edge connecting the two ends of a fork the ``base''.

We will now consider the original path decomposition for $K_{2k}$. Consider an arbitrary vertex $v$. Note that $k-1$ paths have a fork centered at $v$. Additionally, note that the base of the fork is an end edge. Thus, we can remove the fork and change the path that the base is on to reconnect the path that the fork was removed from (Figure~\ref{fig:edgeremoval}).

\begin{figure}[htp]
    \centering
     \begin{subfigure}[t]{0.32\textwidth}
         \centering
         \includegraphics[width=\textwidth]{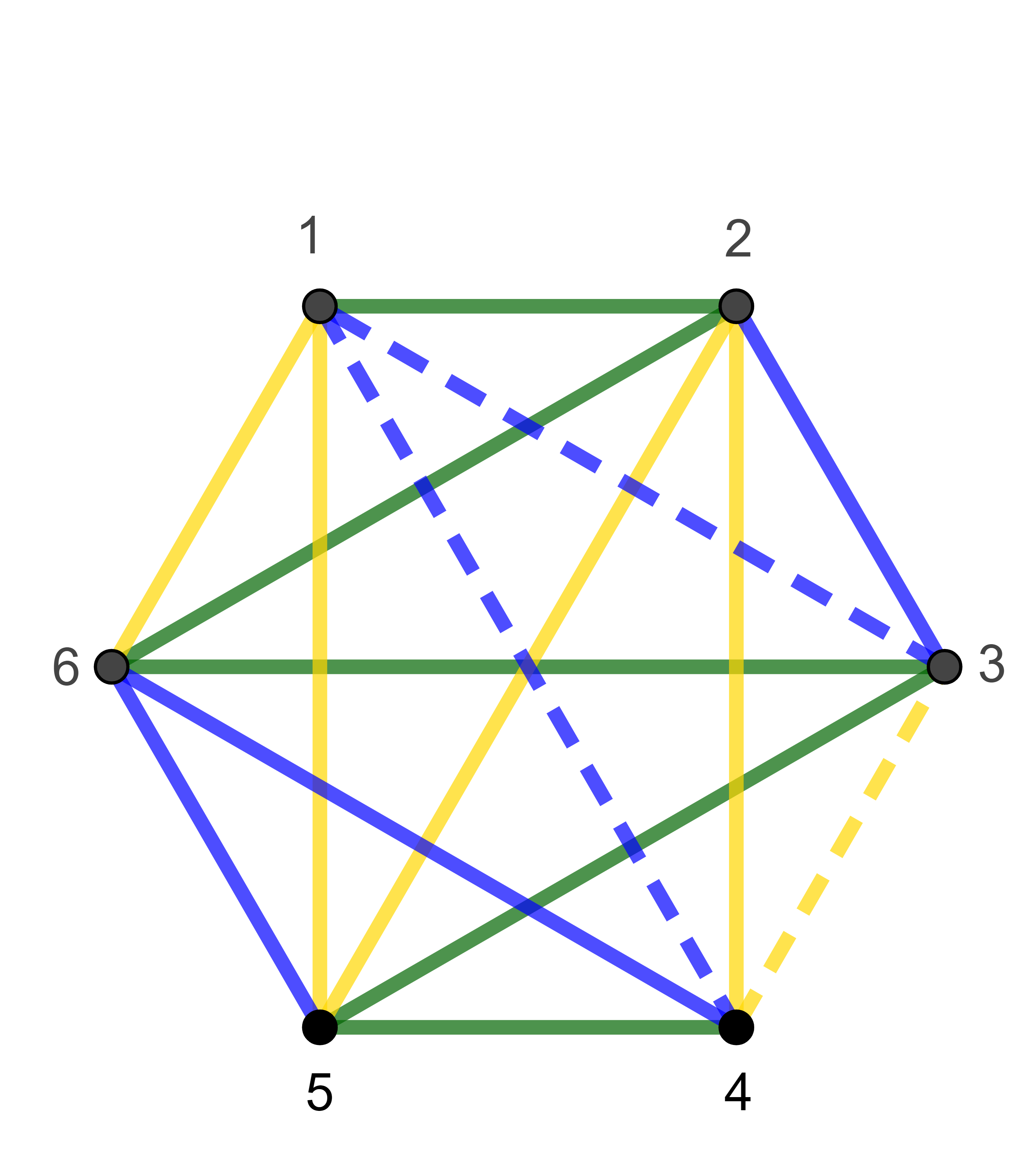}
         \caption{Dotted blue fork with yellow base}
     \end{subfigure}
     \hfill
     \begin{subfigure}[t]{0.32\textwidth}
         \centering
         \includegraphics[width=\textwidth]{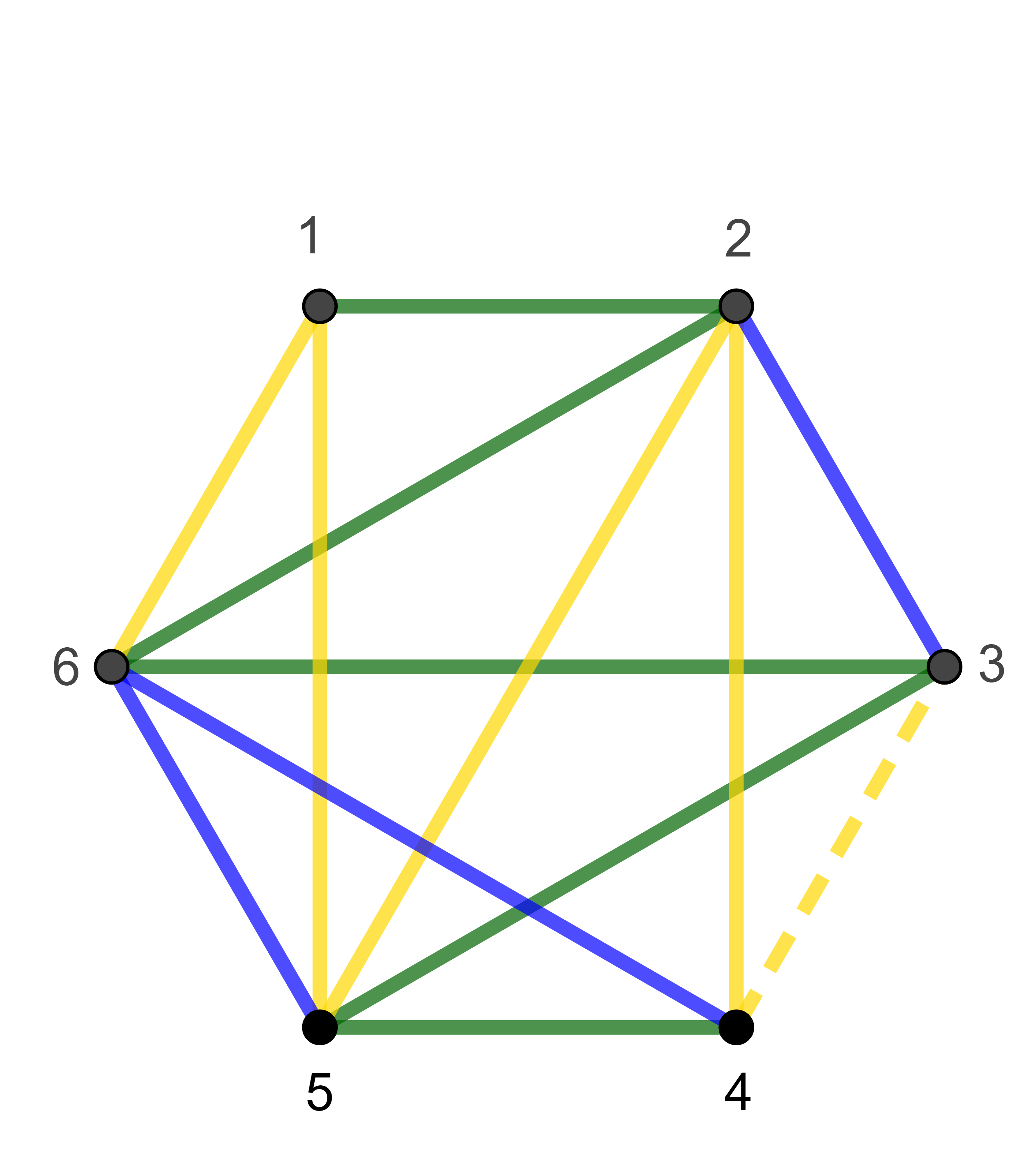}
         \caption{Removing blue fork}
     \end{subfigure}
     \hfill
     \begin{subfigure}[t]{0.32\textwidth}
         \centering
         \includegraphics[width=\textwidth]{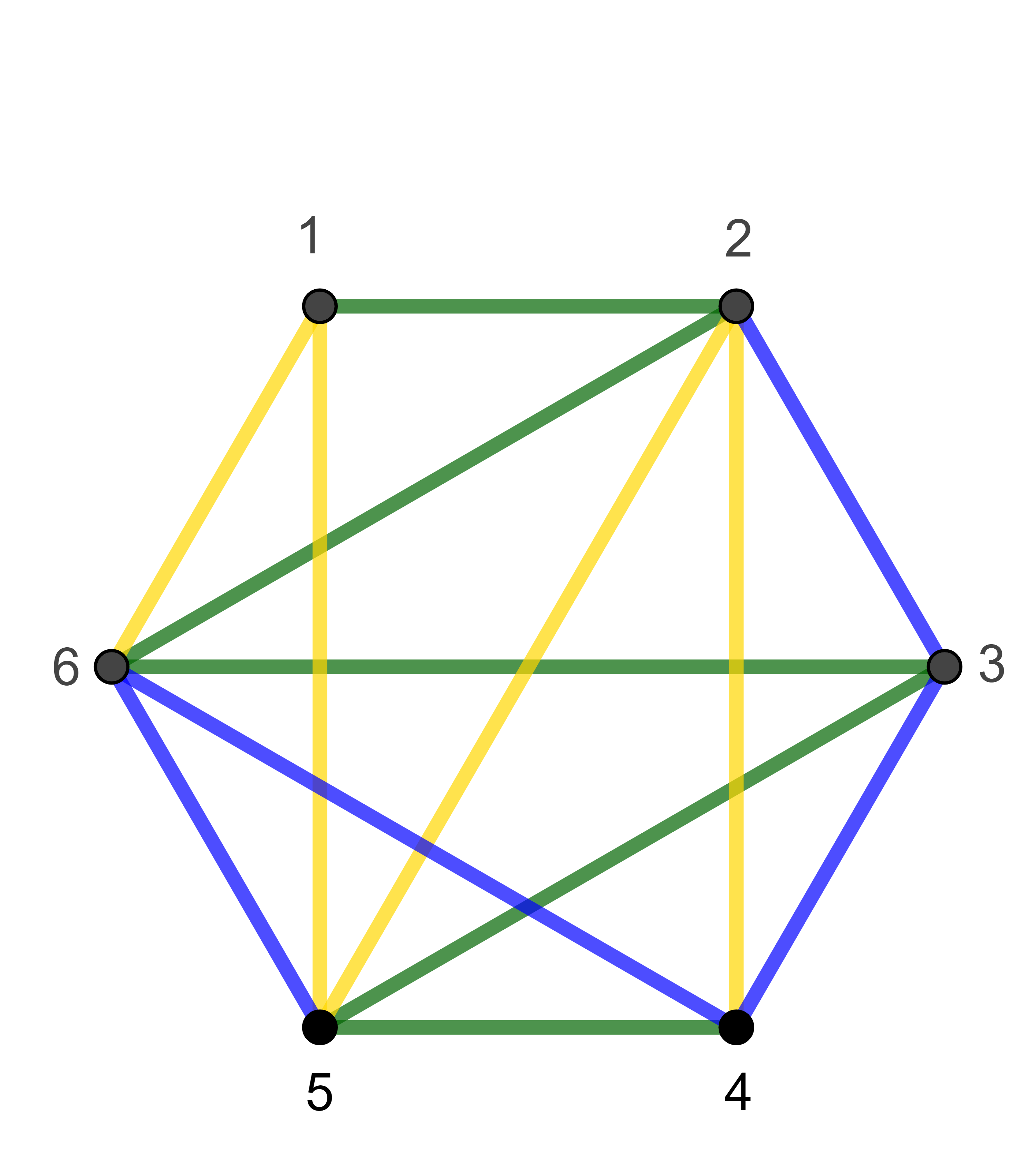}
         \caption{Reconnecting path by changing the color of base}
     \end{subfigure}
    
    \caption{Removing two consecutive edges from a complete graph with even vertexes ($K_6$)}
    \label{fig:edgeremoval}
\end{figure}

Additionally, we can remove the lone end edge that connected to $v$ without disconnecting any paths. Thus, we can remove a star with up to $2k-2$ edges (centered at $v$) from $K_{2k}$ and still have the same number of paths in the path decomposition, verifying Gallai's Conjecture.

\end{itemize}

\end{proof}

\subsection{Removing tadpoles}
We now turn to tadpoles as another structure that can be removed from complete graphs, with Gallai's Conjecture still holding. The $T_{m,n}$ tadpole graph~\cite{vaidya11}, is the graph obtained by joining any vertex of $C_m$ to one end of $P_n$. Figure \ref{fig:T51} shows $T_{5,1}$.

\begin{figure}[htp]
    \centering
    \includegraphics[height=3cm]{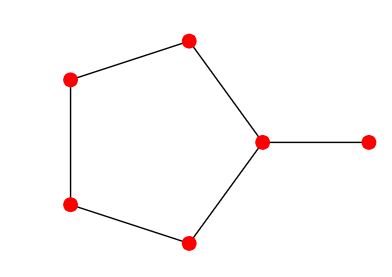}
    \caption{Tadpole, $T_{5,1}$}
    \label{fig:T51}
\end{figure}
\begin{theorem}
Given $m \leq n-1$ for $n \geq 2$, Gallai's Conjecture holds for any graph $G$ obtained from removing a tadpole $T_{m,1}$ from $K_n$.
\end{theorem}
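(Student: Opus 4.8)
The plan is to follow the template of the star-removal theorem: fix the explicit decomposition of $K_n$ supplied by Lemmas~\ref{Gallaievencomplete}--\ref{Thm: Complete graph decomp for even k} (chosen according to the parity of $n$ and, for odd $n$, of $k$), and use the relabeling remark to place the tadpole wherever is convenient. I would lean on the two count-preserving primitives already isolated for stars: trimming an end edge, which only shortens a path, and removing a fork by recoloring its base, which deletes two adjacent edges of a path while keeping it connected through an end edge. The goal is then to write the $m+1$ edges of $T_{m,1}$ as a disjoint union of such removable pieces, so that the resulting decomposition still has at most $\lfloor\frac{n+1}{2}\rfloor$ paths.

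The structural idea I would exploit is that $T_{m,1}$ is a single path plus one chord. Writing the cycle as $w_1w_2\cdots w_mw_1$ with pendant $w_0w_1$, the tadpole is the \emph{spine} $w_0w_1w_2\cdots w_m$ together with the \emph{chord} $w_1w_m$. First I would place the spine so that it coincides with an initial or terminal segment of a long path in the decomposition --- for odd $n$ with $k$ odd the Hamiltonian path $v_1v_2\cdots v_{2k}$ built in Lemma~\ref{Thm: Complete graph decomp for odd k} is the natural host, and a comparable long path is available in the other two constructions --- and delete the spine edges by repeated end-trimming, which never splits the host path. That leaves only the chord $w_1w_m$, which I would remove either by arranging (through the choice of placement) that it is already an end edge of its own path, or, failing that, by retaining one incident spine edge (say $w_0w_1$, which together with the chord forms a fork at $w_1$ with base $w_0w_m$) and deleting the resulting fork through its base. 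Either way the full tadpole is removed with no increase in the path count.

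I expect the chord to be the crux. End-trimming the spine is routine once it is aligned with a path end, but the chord lies on a different path of the Walecki rotation and there is no a priori reason for it to sit conveniently at an end or to be fork-adjacent to an available spine edge; deleting it from the interior of a path would split that path and violate the bound. The real content of the proof is therefore to use the rotational symmetry of the Walecki construction to choose the placement of $T_{m,1}$ so that the spine is trimmable \emph{and} the chord is simultaneously removable, and to check that all the reroutings involved are edge-disjoint. I would also anticipate that the parity of $m$, the degree-$3$ hub $w_1$, and especially the boundary case $m=n-1$ --- where for odd $n$ the spine is a Hamiltonian path on all $n$ vertices while the host path $v_1\cdots v_{2k}$ misses $v_{2k+1}$ --- will each need separate, somewhat delicate treatment, just as the odd/even split did for stars.
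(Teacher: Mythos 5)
There is a genuine gap here: you correctly isolate the two count-preserving primitives (end-trimming, and fork removal repaired by reassigning the base), but the step you explicitly defer --- choosing a placement under which the chord is removable --- is precisely the entire content of the paper's proof, and the concrete placement you sketch would fail. Aligning the spine $w_0w_1\cdots w_m$ with an end segment of a single host path is the wrong move in the Walecki decomposition: a zigzag's end segment visits vertices in the order $1,2,2k,3,2k-1,\dots$, and for such a placement the chord $w_1w_m$ generically lands in the \emph{interior} of a different zigzag, where deleting it splits that path. Your fallback fork $\{w_0w_1,\,w_1w_m\}$ does not repair this, because those two edges lie on \emph{different} paths of the decomposition ($w_0w_1$ on the host, the chord elsewhere). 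A fork in the sense of the star-removal proof must consist of two consecutive edges of one path: the whole point of the move is that the base reconnects the single path that was split, and removing one edge from each of two paths cannot be fixed by one base edge. You also never verify that your base $w_0w_m$ is an end edge of some other path, which the mechanism requires. As you yourself concede (``the real content of the proof is therefore to \dots choose the placement''), the proposal is a plan with the decisive combinatorial verification missing.

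The paper's placement resolves the crux differently and in one stroke. It does not host the cycle on a single path at all: it lays the cycle along $m-1$ consecutive \emph{outer} $n$-gon edges $v_1v_2,\dots,v_{m-1}v_m$, which in the Walecki decomposition are precisely end edges of the zigzag paths (distributed over distinct paths), hence all simultaneously trimmable with no interaction. The leftover chord $v_1v_m$ and the pendant $v_mv_{2k}$ then turn out to be two \emph{consecutive} edges of one and the same zigzag path --- a genuine fork centered at $v_m$ --- whose base $v_1v_{2k}$ is again an outer edge and hence an end edge of another path; removing the fork and reassigning the base preserves the path count (see Figure~\ref{fig:tadpoleremoval}). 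Your instinct about hosting a long piece of the tadpole on the Hamiltonian path $v_1\cdots v_{2k}$ from Lemma~\ref{Thm: Complete graph decomp for odd k} is used by the paper only in the extremal case $m=2k$ for $n=2k+1$, where the cycle is $v_1v_2\cdots v_{2k}v_1$ and the hanging edge is $v_{2k}v_{2k+1}$; it is not the general mechanism. So while your toolkit matches the paper's, the proposal as written does not constitute a proof, and its stated treatment of the chord rests on an inapplicable use of the fork primitive.
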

\newpage
\begin{proof}

As before, we consider two cases:

\begin{itemize}

\item $n=2k$:

Consider our original path decomposition for $K_{2k}$. We will now provide a construction for a $T_{m,1}$ that can be removed so that the remaining path decomposition still satisfies Gallai's Conjecture. The first portion of $T_{m,1}$ is the path $v_1v_2 \dots v_{m}$ of length $m-1$. 

Now, let $P_1$ be the path in the construction for $K_{2k}$ that contains $v_{m}v_{2k}$. We now form a fork with $v_{m}v_{2k}$ and the edge on $P_1$ that does not contain both vertices in the cycle. Then, removing the fork and cycle and reassigning the base of the fork to $P_2$ completes the removal of $T_{m,1}$. Note that the base will be an end edge and thus no paths are disconnected when the path of the base is changed to $P_1$. Our new graph and decomposition still satisfy Gallai's Conjecture. 

Figure~\ref{fig:tadpoleremoval} shows the process of removing a $T_{6, 1}$ from $K_8$. 
\begin{figure}[htp]
    \centering
     \begin{subfigure}[t]{0.32\textwidth}
         \centering
         \includegraphics[width=\textwidth]{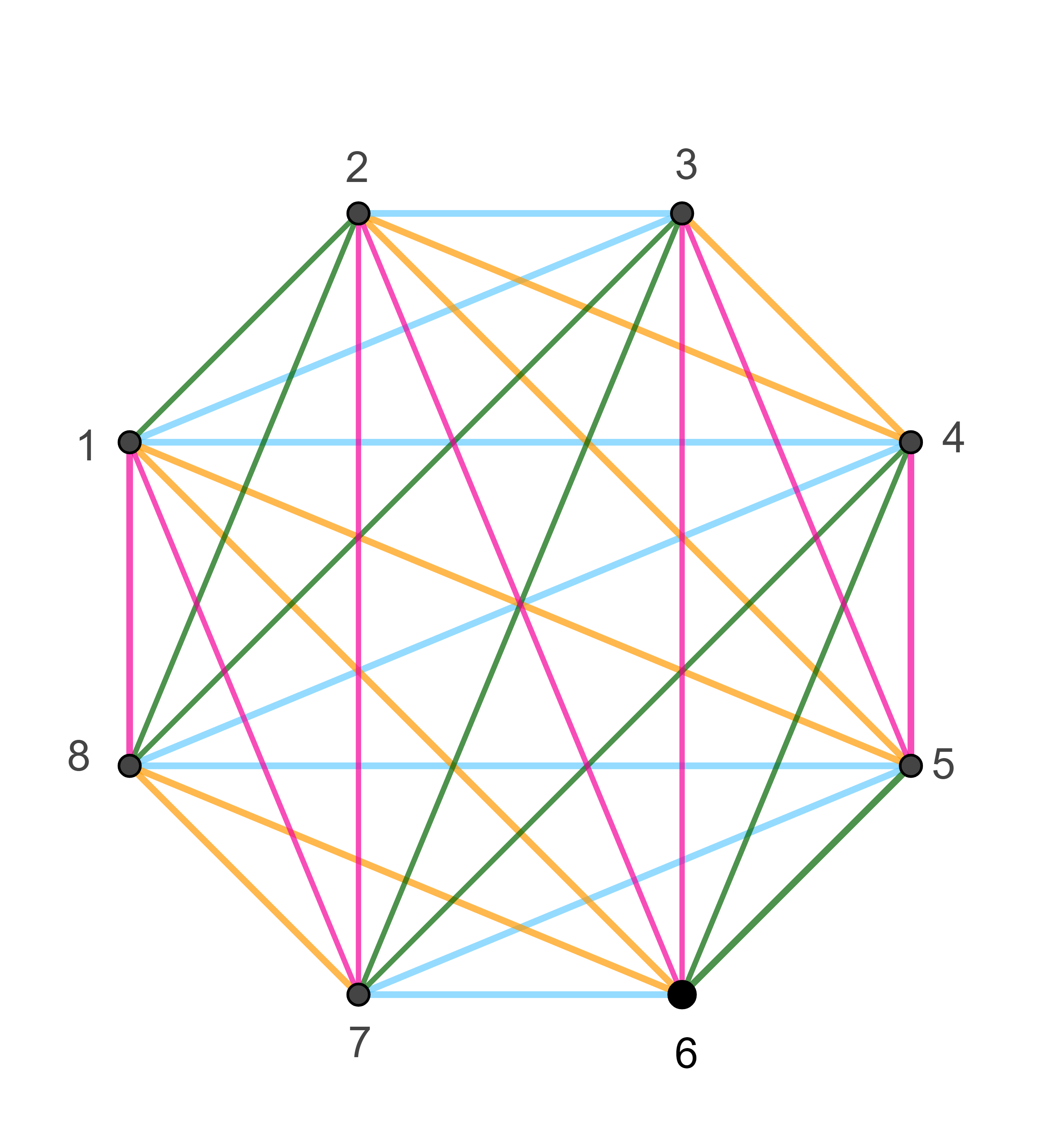}         \caption{$K_8$}
     \end{subfigure}
     \hfill
     \begin{subfigure}[t]{0.32\textwidth}
         \centering
         \includegraphics[width=\textwidth]{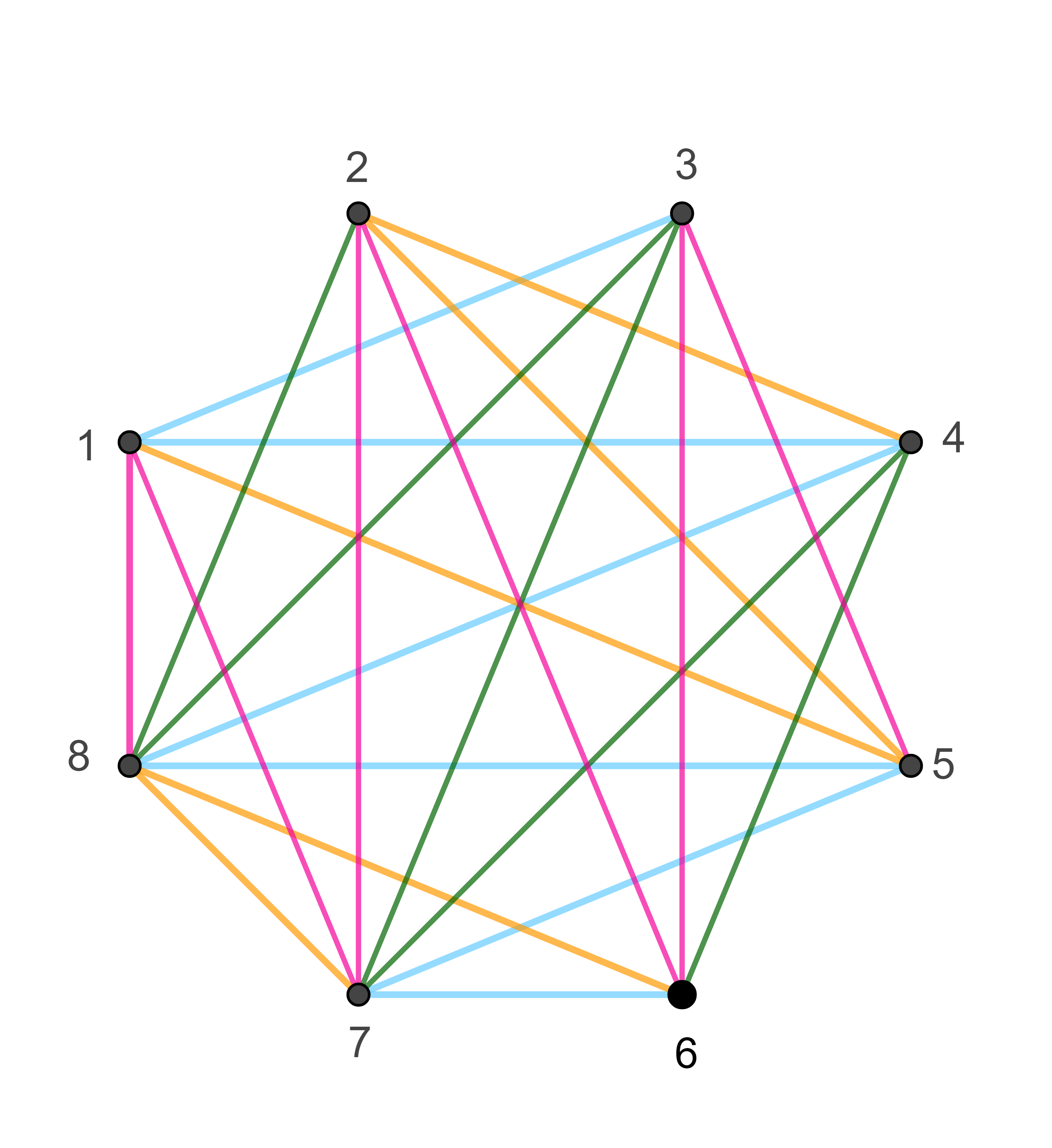}
         \caption{Removing $v_1v_2v_3v_4v_5v_6v_1$}
     \end{subfigure}
     \hfill
     \begin{subfigure}[t]{0.32\textwidth}
         \centering
         \includegraphics[width=\textwidth]{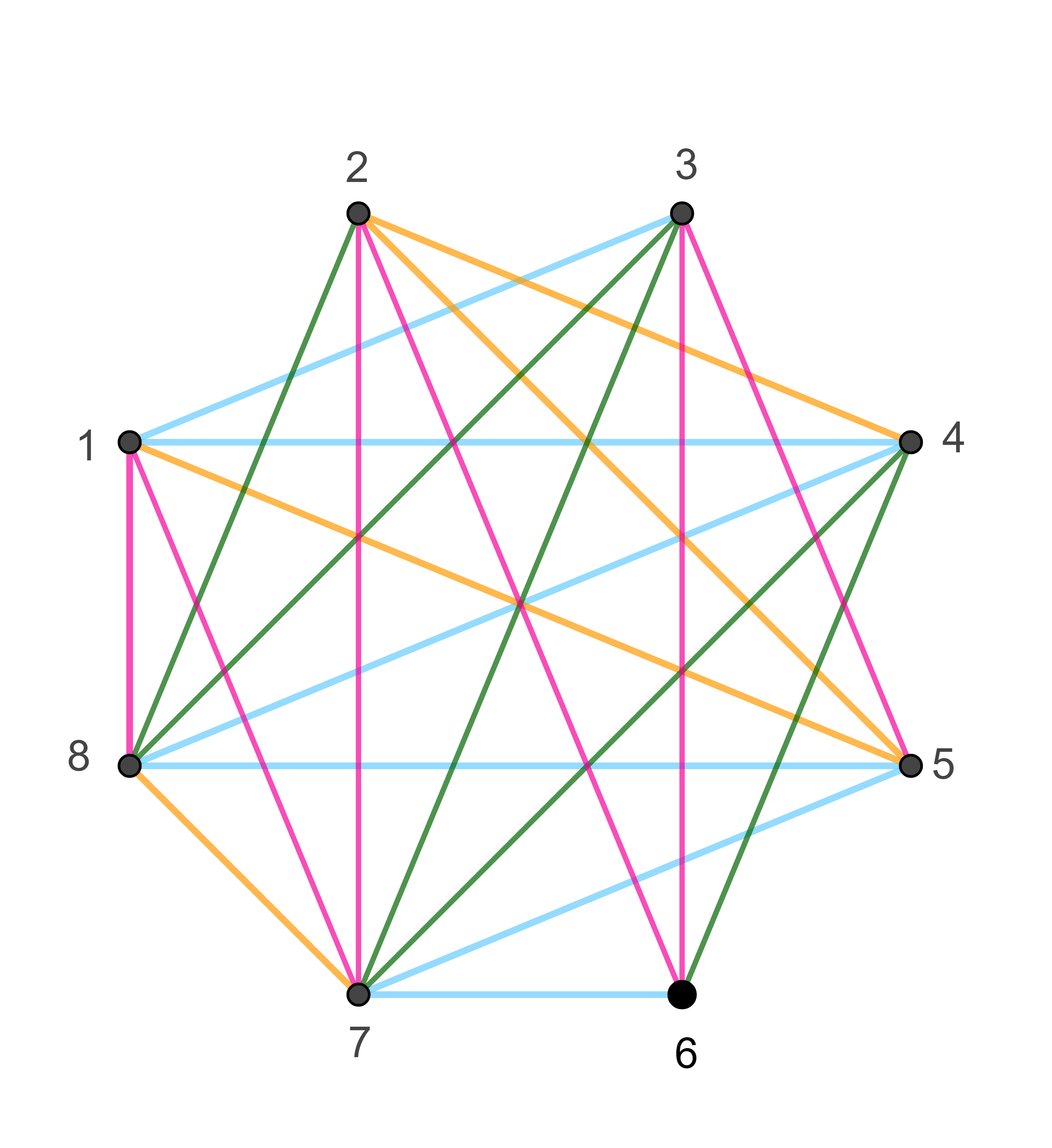}
         \caption{Removing $v_6v_8$}
     \end{subfigure}

         \begin{subfigure}[t]{0.32\textwidth}
         \centering
         \includegraphics[width=\textwidth]{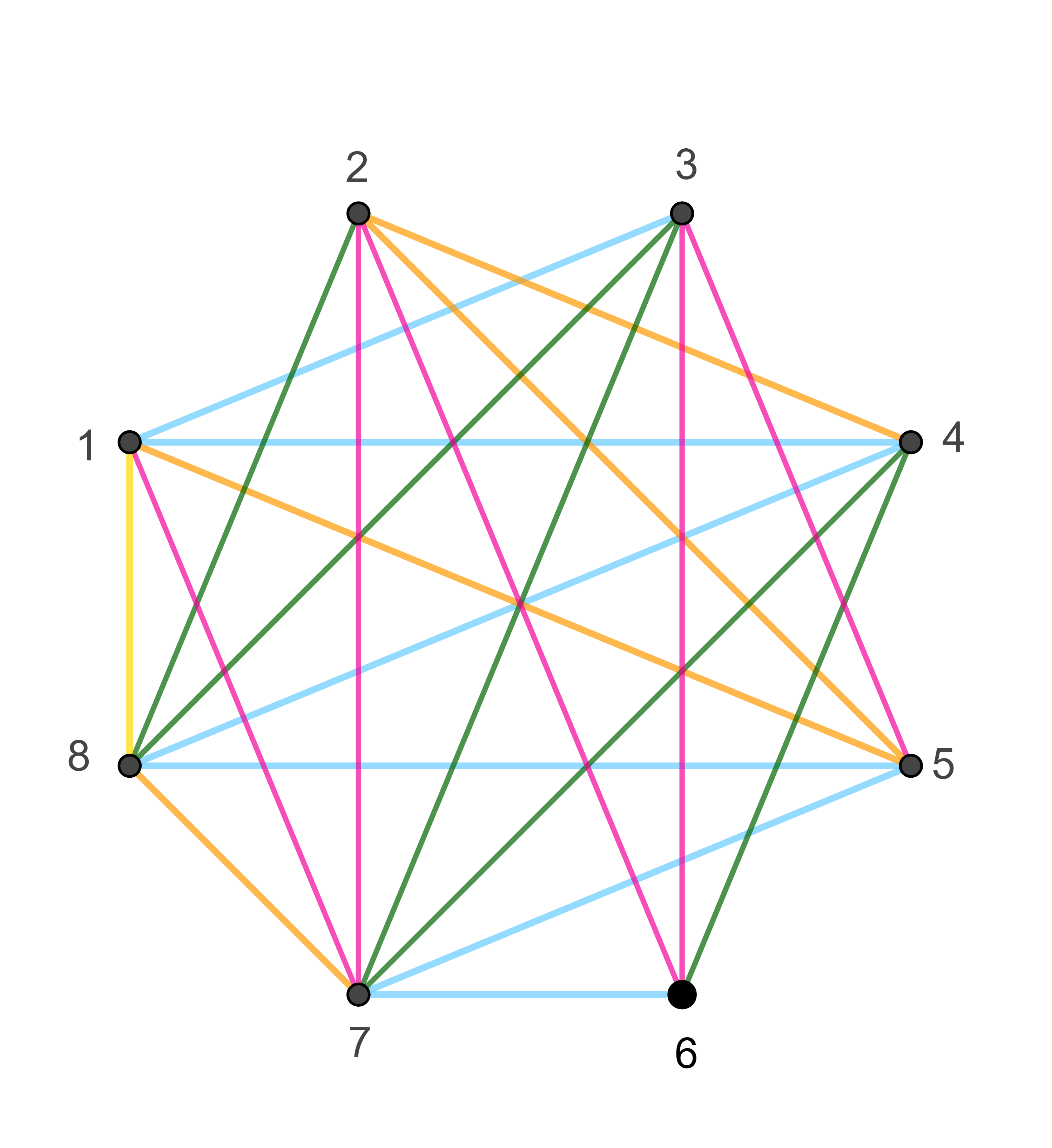}
         \caption{Switching $v_8v_1$ to orange path}
     \end{subfigure}
     \hskip 0.3in
     \begin{subfigure}[t]{0.32\textwidth}
         \centering
         \includegraphics[width=\textwidth]{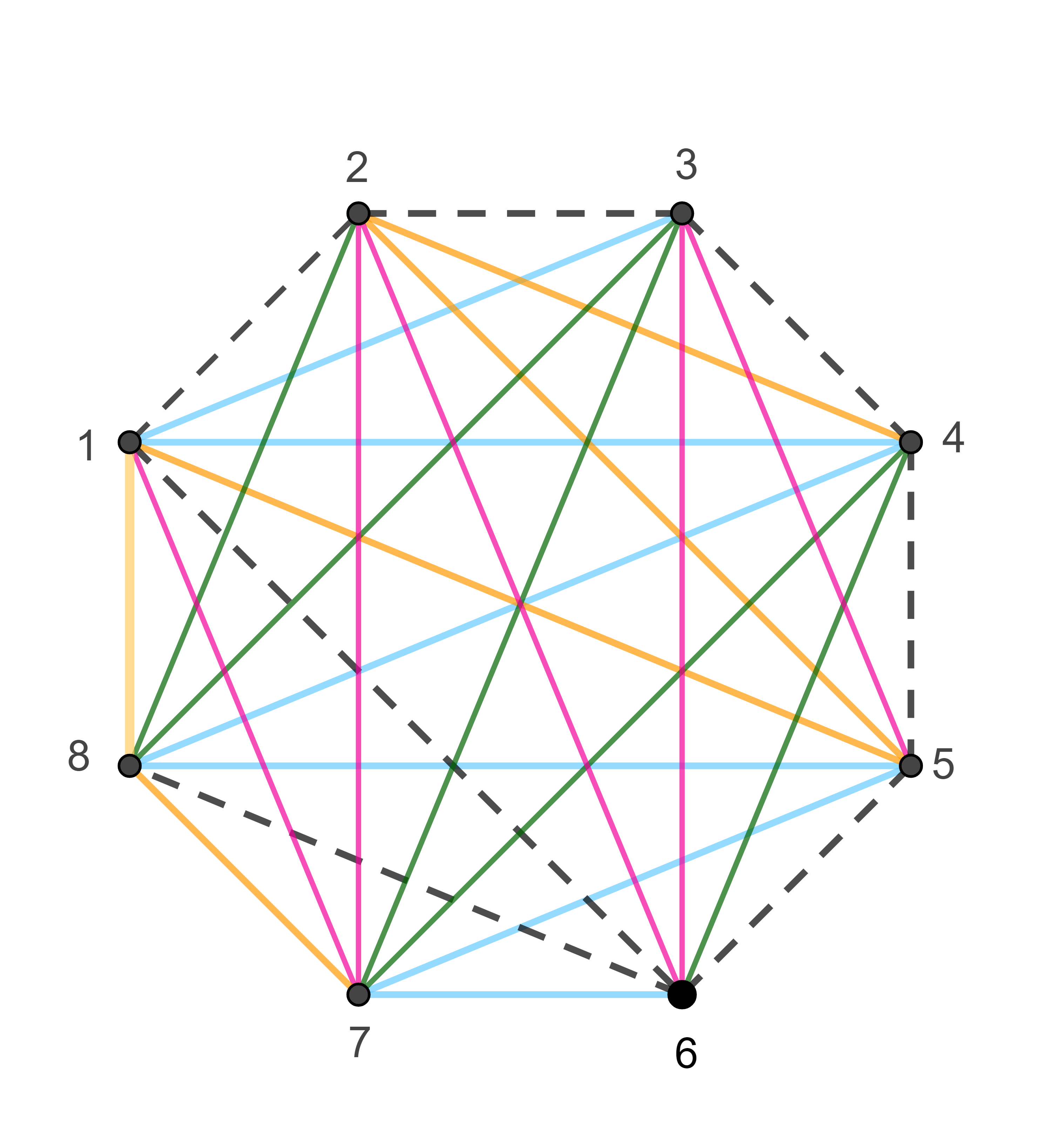}
         \caption{Removed tadpole shown in dotted line}
     \end{subfigure}
    \caption{Removing a tadpole, $T_{6,1}$, from $K_8$}
    \label{fig:tadpoleremoval}
\end{figure}

\item $n=2k+1$:

For odd $k$, again, we start with our initial construction for the path decomposition of such a complete graph. For a cycle of length $m$ where $m < 2k$ we use the same construction as with the even case. For $m = 2k$ we use the cycle $v_1v_2\dots v_{2k}$ and let the hanging edge be $v_{2k}v_{2k+1}$. These constructions show that all tadpoles with cycle length up to $2k$ and path length $1$ can be removed from the complete graph and still form a graph satisfying Gallai's Conjecture. 

The argument for a complete graph with $2k+1$ vertices where $k$ is even is similar. 

Figure~\ref{fig:tadpoleodd} shows the process of removing a $T_{4, 1}$ from $K_7$. 

\begin{figure}[htp]
    \centering
     \begin{subfigure}[t]{0.3\textwidth}
         \centering
         \includegraphics[width=\textwidth]{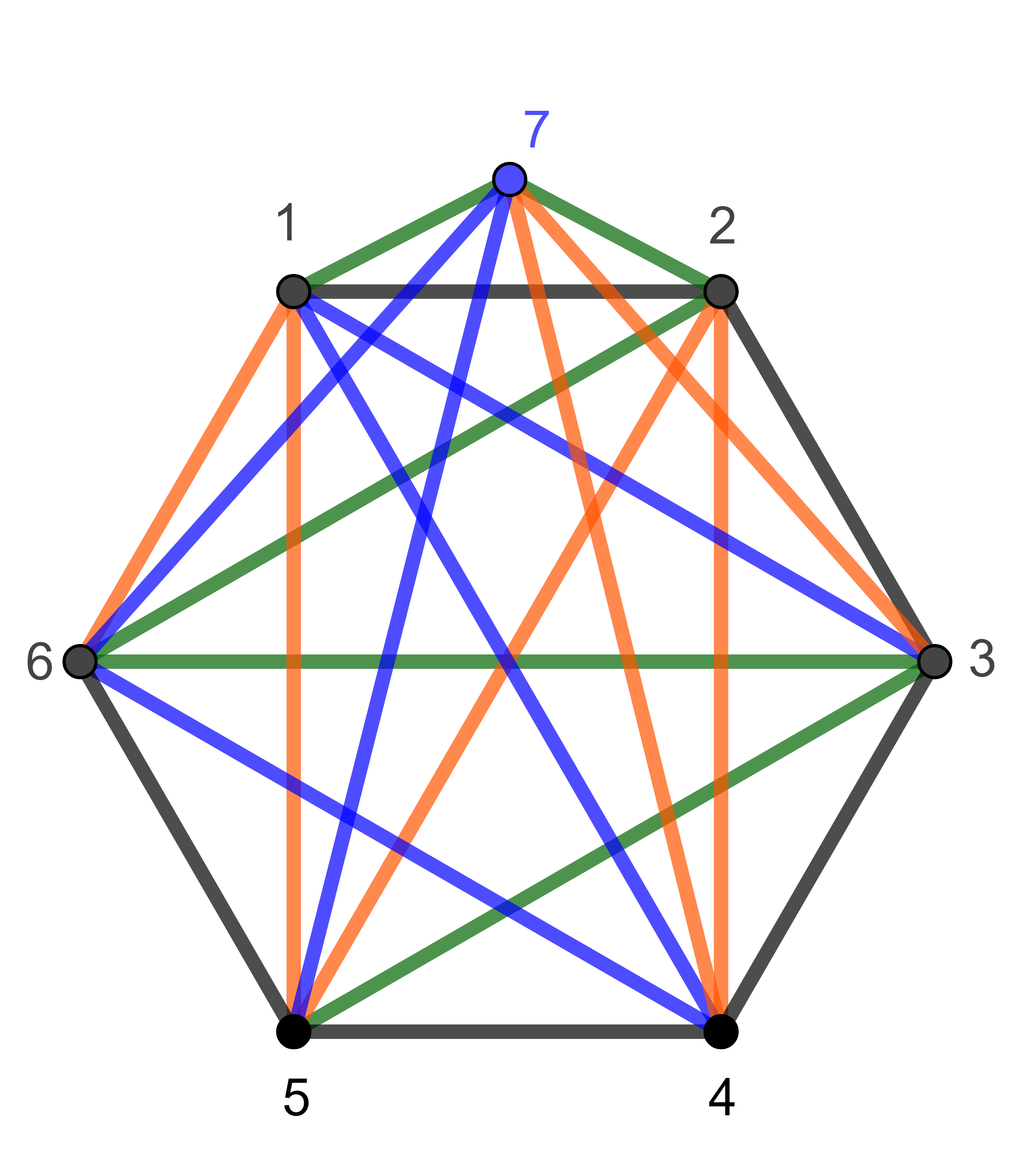}       
         \caption{$K_7$}
     \end{subfigure}
     \hfill
     \begin{subfigure}[t]{0.3\textwidth}
         \centering
         \includegraphics[width=\textwidth]{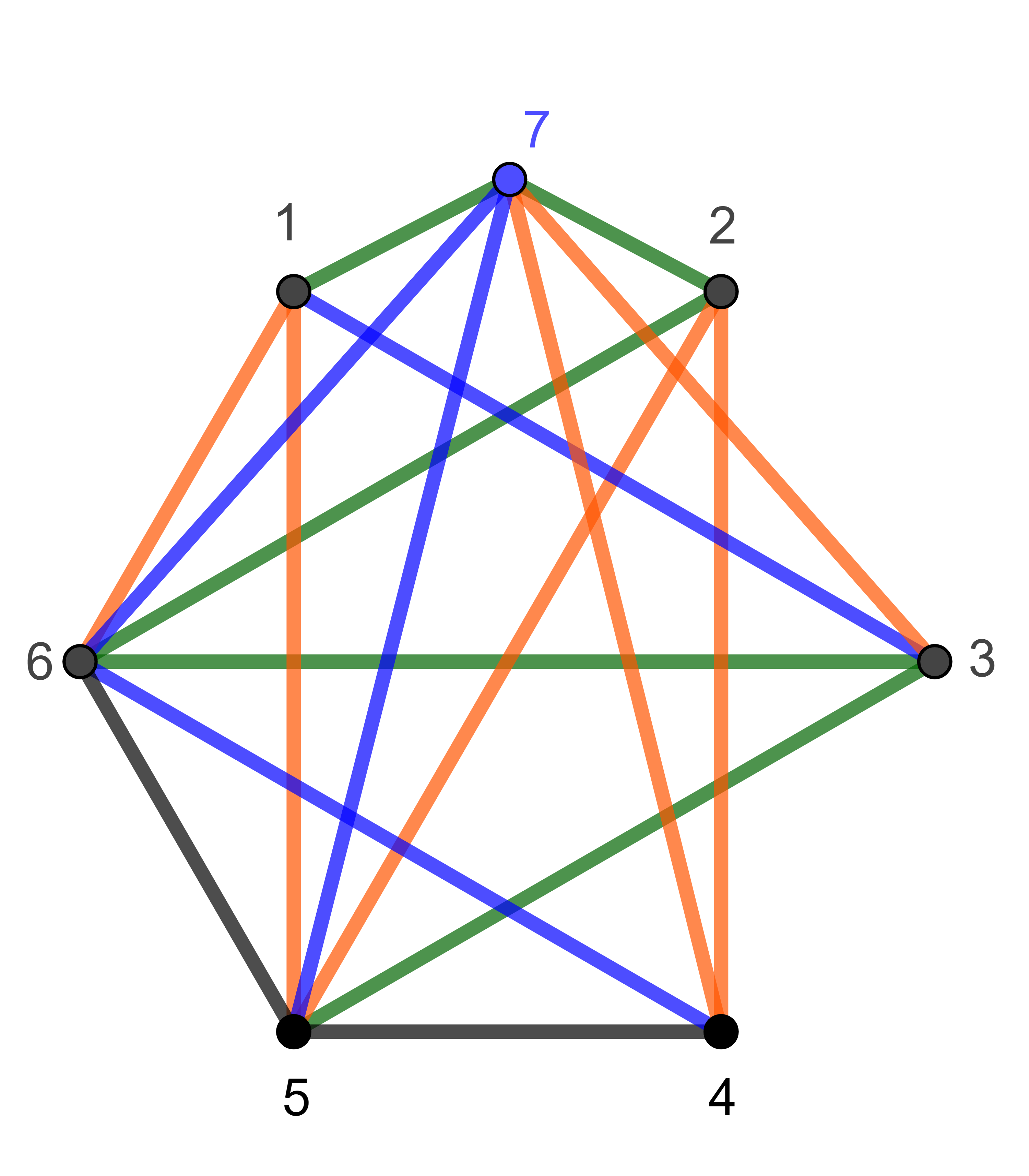}
         \caption{Removing $v_1v_2v_3v_4v_1$}
     \end{subfigure}
     \hfill
     \begin{subfigure}[t]{0.3\textwidth}
         \centering
         \includegraphics[width=\textwidth]{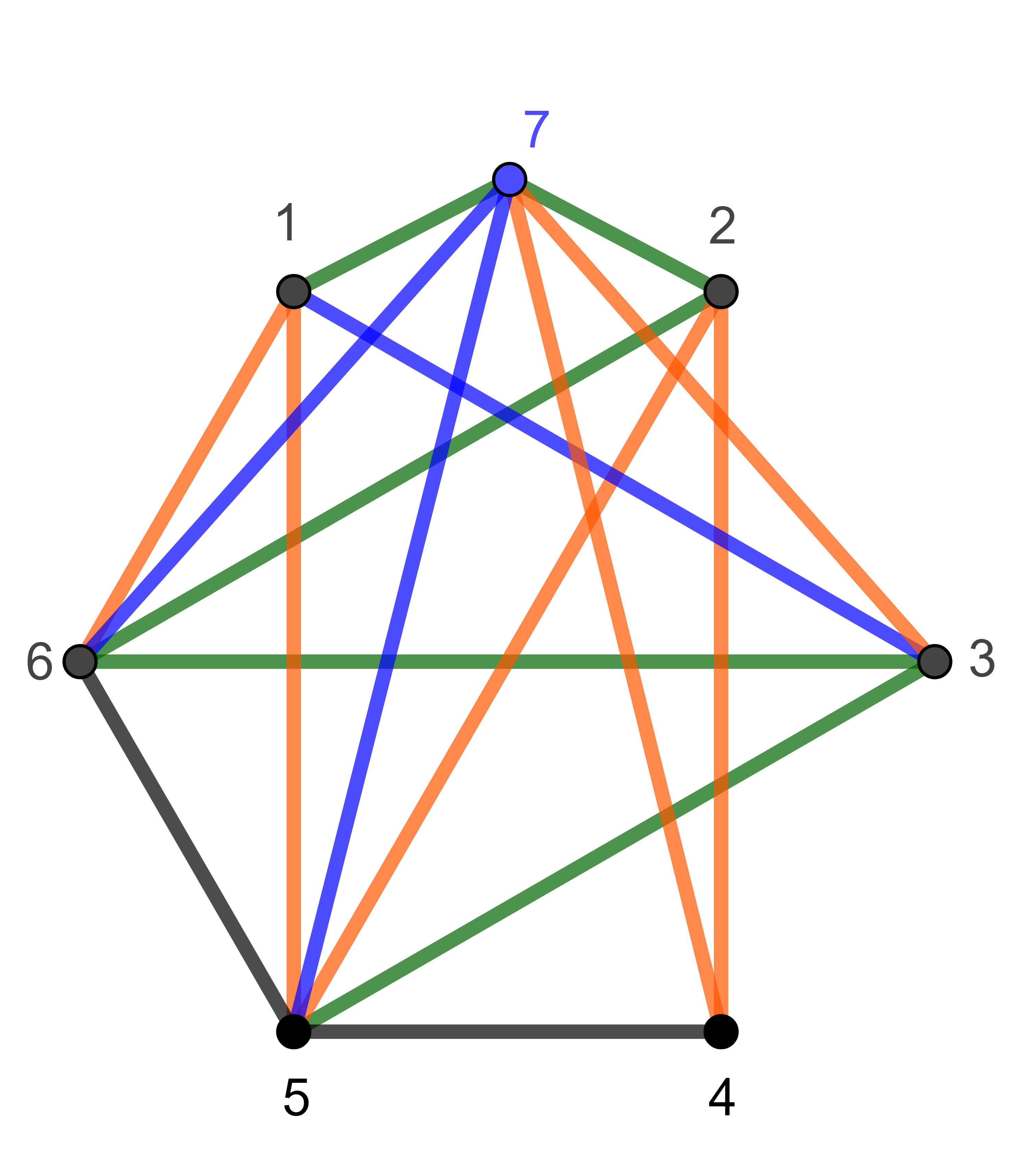}
         \caption{Removing $v_4v_6$}
     \end{subfigure}

         \begin{subfigure}[t]{0.3\textwidth}
         \centering
         \includegraphics[width=\textwidth]{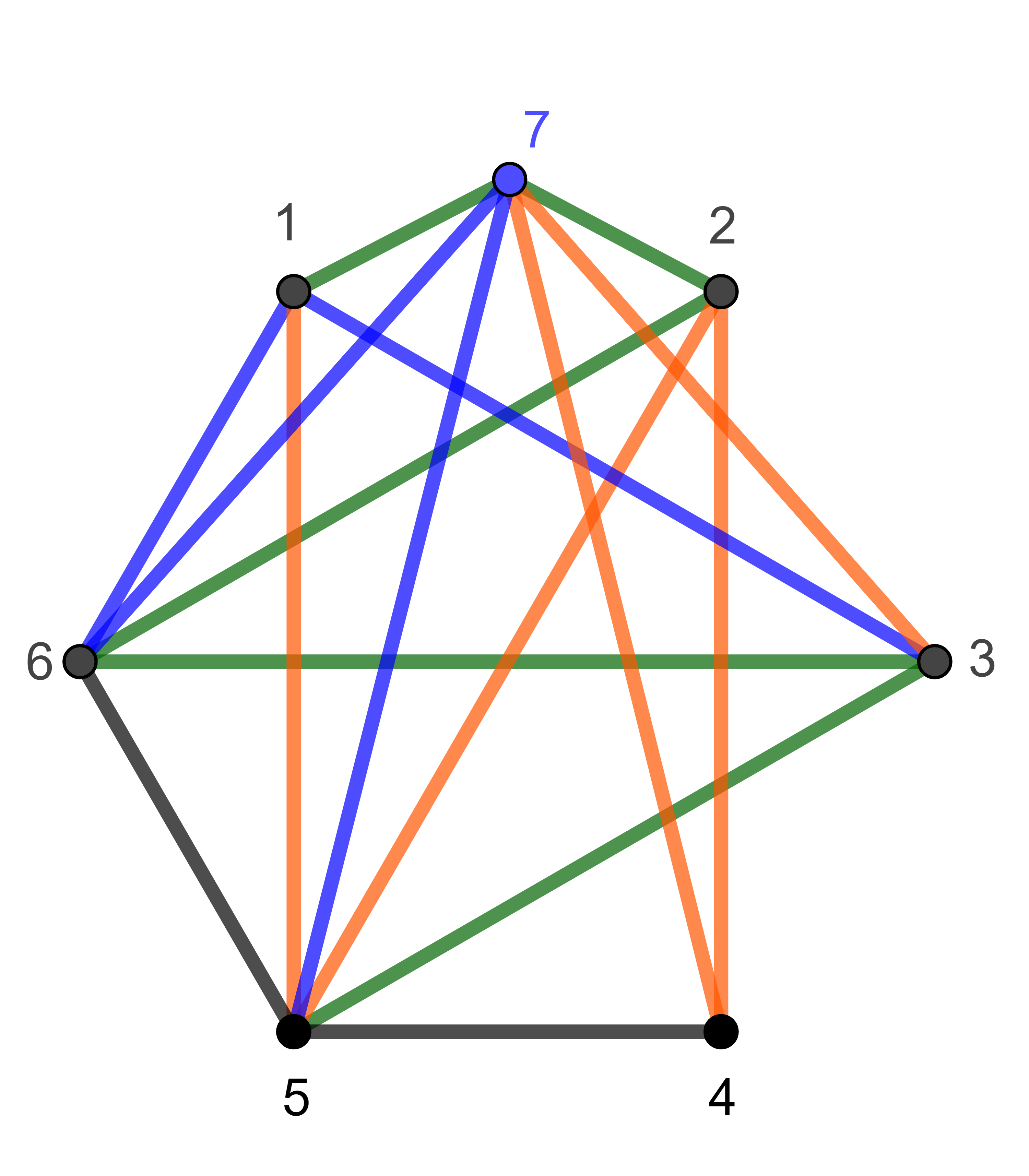}
         \caption{Changing $v_1v_6$ to blue path}
     \end{subfigure}
     \hskip 0.3in
     \begin{subfigure}[t]{0.3\textwidth}
         \centering
         \includegraphics[width=\textwidth]{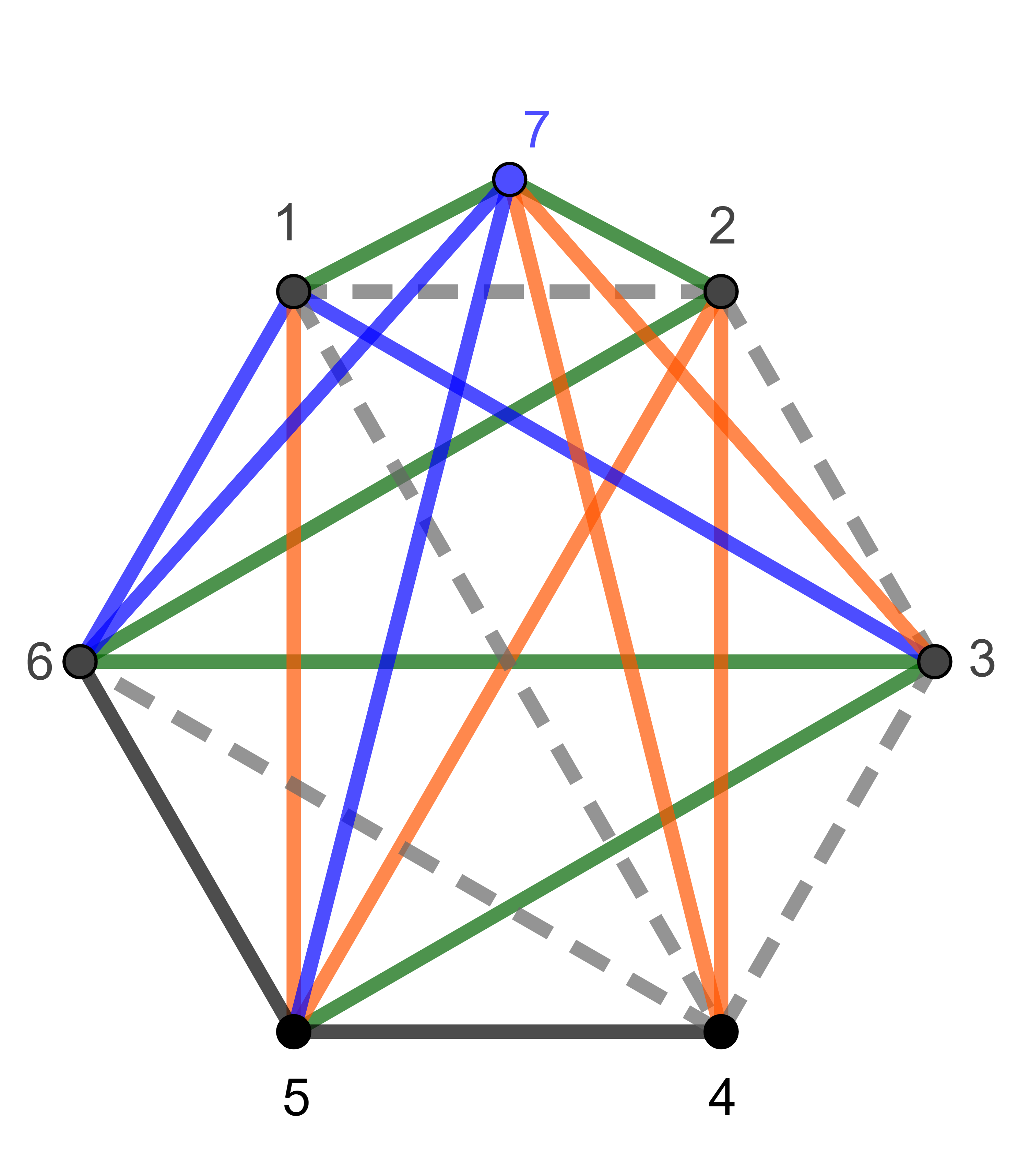}
         \caption{Dotted line shows removed tadpole}
     \end{subfigure}
    \caption{Removing a tadpole, $T_{4,1}$, from $K_7$}
    \label{fig:tadpoleodd}
\end{figure}

\end{itemize}

\end{proof}

\newpage
\section{Concluding remarks}\label{sec:con}
In this paper, we presented explicit constructions for a path decomposition that satisfied Gallai's Conjecture for all complete graphs. Using these constructions, we show that stars and certain tadpoles could be removed from any complete graph so that the remaining decomposition still satisfies Gallai's Conjecture. It seems that determining types of graphs that can be removed from complete graphs while still preserving the property that they satisfy Gallai's Conjecture can be a viable method in general. These families of graphs are not covered by previous research results that are bounded by degree restrictions, highlighting the usefulness of our approach to Gallai's Conjecture. 


In general, given a path decomposition for a graph, we can remove any number of edges from the ends of any paths with the remaining decomposition still satisfying Gallai's Conjecture. 
Consequently, any subgraphs formed by ``path ends'' in our decomposition may be removed, leaving a new graph and decomposition with Gallai's Conjecture satisfied. This was the key idea behind the proofs in the previous section.

On the other hand, this ``path ends removal'' approach fails if a subgraph (to be removed) cannot be formed by path ends in our (or any) decomposition. To further explore this direction, it makes sense to consider all possible (non-isomorphic) path decompositions (satisfying Gallai's Conjecture) of complete graphs. 


As examples, we present three non-isomorphic path decompositions that satisfy Gallai's Conjecture for $K_6$ in Figure~\ref{fig:nonisomorphic} and two for $K_8$ in Figure~\ref{fig:nonisomorphic8} respectively.

\begin{figure}[htp]
    \centering
    \includegraphics[width=5cm]{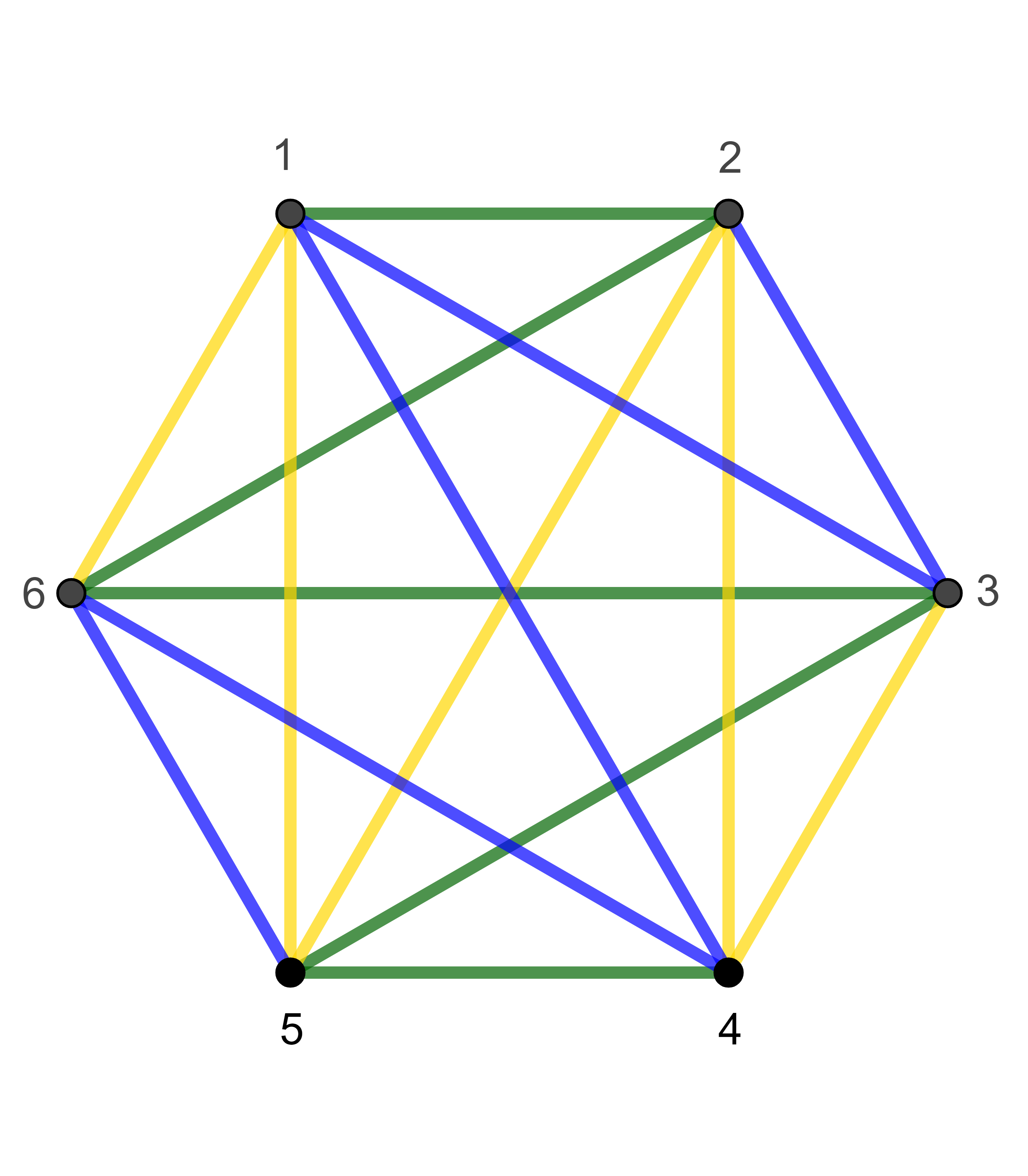}
    \includegraphics[width=5cm]{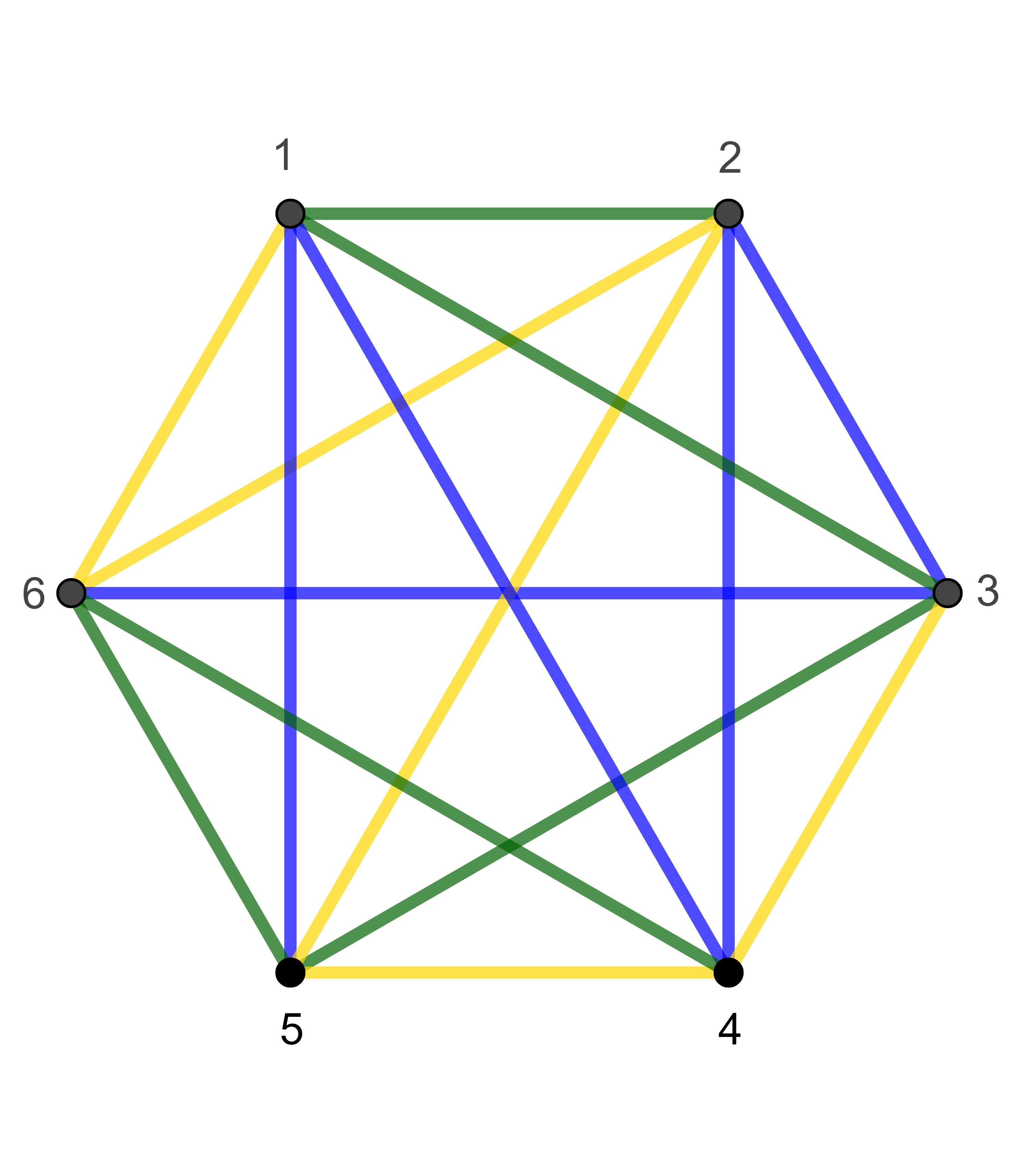}
    \includegraphics[width=5cm]{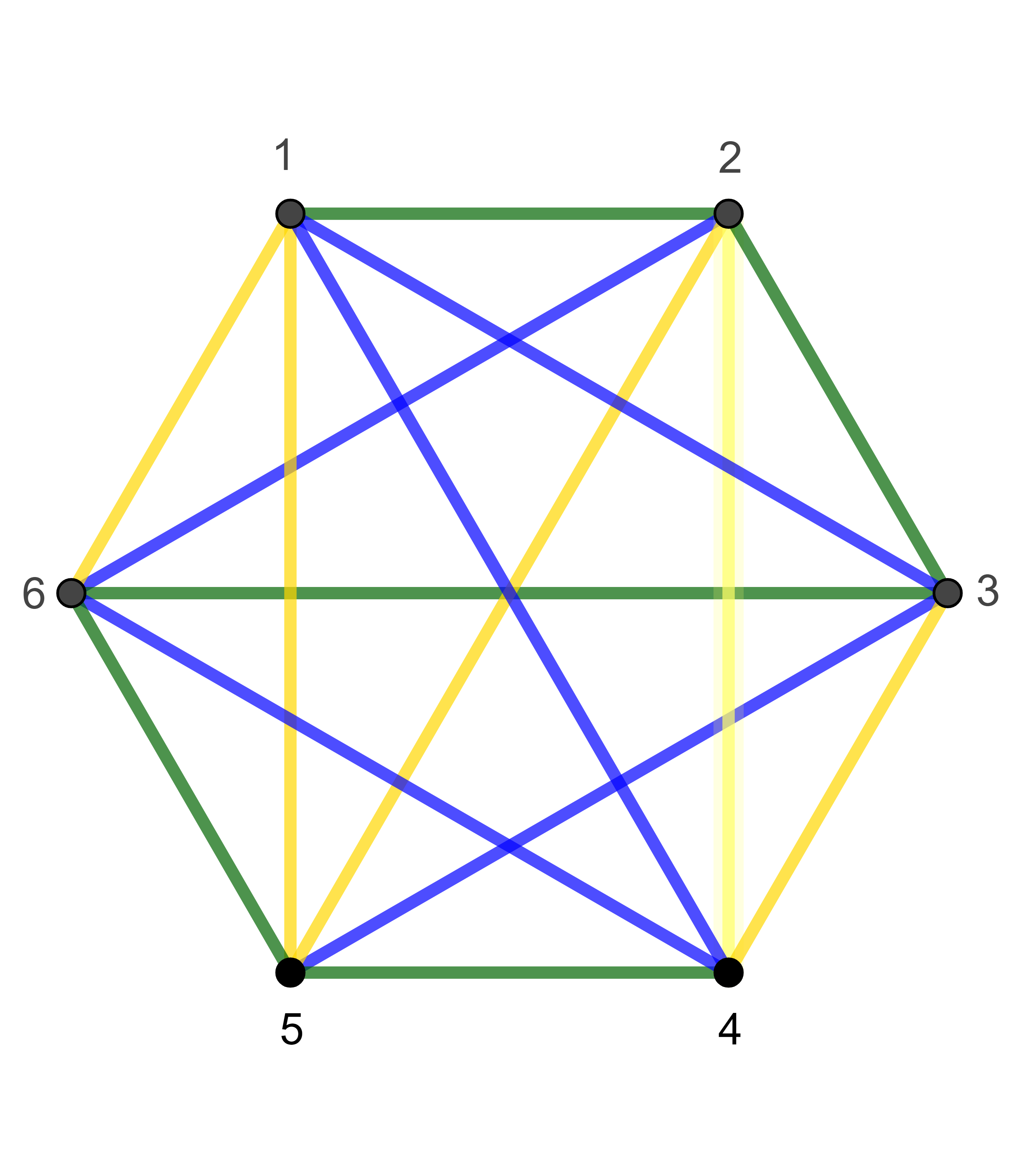}
    \caption{The three non-isomorphic path decompositions for $K_6$}
    \label{fig:nonisomorphic}
\end{figure}

\begin{figure}[htp]
    \centering
         \includegraphics[width=6.5cm]{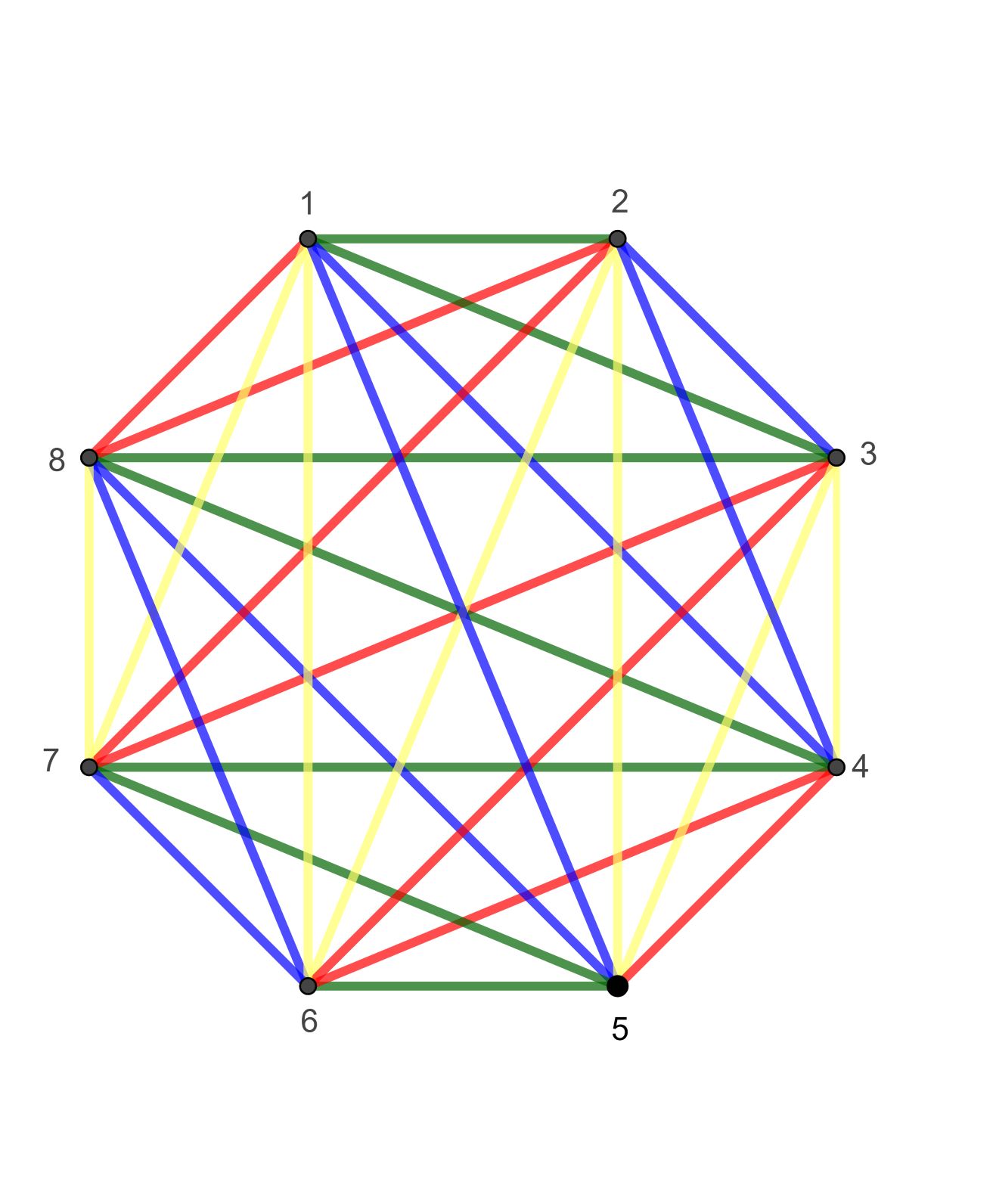}
         \includegraphics[width=6.5cm]{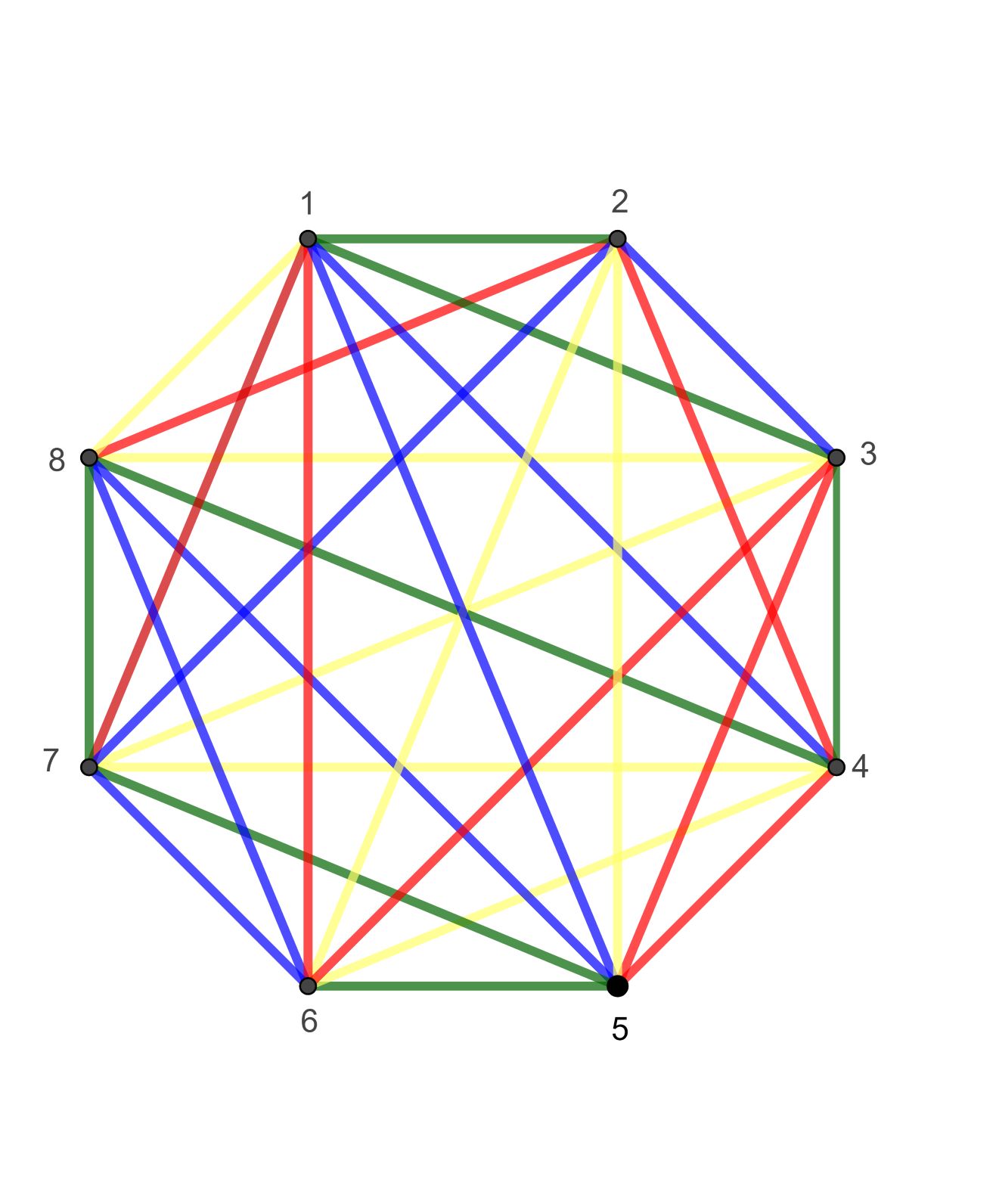}
    \caption{Two non-isomorphic path decompositions for $K_8$}
    \label{fig:nonisomorphic8}
\end{figure}

Through simple counting arguments, we can verify that the three path decompositions shown in Figure~\ref{fig:nonisomorphic} are indeed all the non-isomorphic decompositions satisfying Gallai's Conjecture. It seems to be an interesting and challenging problem to find all such path decompositions for $K_n$ for general $n$.


\section*{Acknowledgement}
The authors would like to thank Wayzata High School Honors Mentor Connection Program and DSM Academy for the opportunity of this research collaboration.


\begin{thebibliography}{30}

\bibitem{El15}
S.~El-Zanati, M.~Ermete, J.~Hasty, M.~Plantholt, and S.~Tipnis, ``On
  decomposing regular graphs into isomorphic double-stars,'' {\em Discussiones
  Mathematicae Graph Theory}, vol.~35, pp.~73 -- 79, 02 2015.

\bibitem{Gal59}
P.~Erdős and T.~Gallai, ``On maximal paths and circuits of graphs,'' {\em Acta
  Mathematica Hungarica - ACTA MATH HUNG}, vol.~10, pp.~337--356, 09 1959.

\bibitem{Aru13}
S.~Arumugam, I.~Sahul~Hamid, and V.~M. Abraham, ``Decomposition of graphs into
  paths and cycles,'' {\em Journal of Discrete Mathematics}, vol.~2013, 01
  2013.

\bibitem{lov68}
L.~László, ``On covering of graphs,'' {\em Theory of Graphs}, p.~231–236,
  01 1968.

\bibitem{lp96}
L.~Pyber, ``Covering the edges of a connected graph by paths,'' {\em J. Comb.
  Theory, Ser. B}, vol.~66, pp.~152--159, 01 1996.

\bibitem{GFan05}
G.~Fan, ``Path decompositions and gallai's conjecture,'' {\em J. Comb. Theory,
  Ser. B}, vol.~93, pp.~117--125, 03 2005.

\bibitem{Botler20}
F.~Botler and M.~Sambinelli, ``Towards gallai's path decomposition
  conjecture,'' {\em Journal of Graph Theory}, vol.~97, 11 2020.

\bibitem{bon19}
M.~Bonamy and T.~Perrett, ``Gallai's path decomposition conjecture for graphs
  of small maximum degree,'' {\em Discrete Mathematics}, vol.~342, 09 2016.

\bibitem{chu21}
Y.~Chu, G.~Fan, and Q.~Liu, ``On gallai’s conjecture for graphs with maximum
  degree 6,'' {\em Discrete Mathematics}, vol.~344, pp.~112--212, 02 2021.

\bibitem{alspach08}
B.~Alspach, ``The wonderful walecki construction,'' {\em Bulletin of the
  Institute of Combinatorics and its Applications}, vol.~52, pp.~7--20, 01
  2008.

\bibitem{vaidya11}
S.~Vaidya and K.~Kanani, ``Some strongly multiplicative graphs in the context
  of arbitrary super subdivision,'' {\em International Journal of Applied
  Mathematics and Computation ,2011 ISSN 0974 - 4665 (print version) ISSN 0974
  - 4673 (electronic version)}, vol.~3, pp.~60--64, 01 2011.
\end{thebibliography}

\end{document}